\documentclass[a4paper,reqno,12pt]{amsart}

\usepackage[utf8]{inputenc}
\usepackage{amsmath, amssymb, amsthm, epsfig,mathrsfs}
\usepackage{hyperref, latexsym}
\usepackage{url}
\usepackage{color}
\usepackage{harpoon}
\usepackage{mathabx}
\usepackage{tikz}

\usepackage{fullpage} 
\usepackage{setspace}
\usepackage{adjustbox}

\usepackage{mathtools}
\mathtoolsset{showonlyrefs}

\def\today{\ifcase\month\or
  January\or February\or March\or April\or May\or June\or
  July\or August\or September\or October\or November\or December\fi
  \space\number\day, \number\year}

\newtheorem{theorem}{Theorem}

\newtheorem{lemma}[theorem]{Lemma}
\newtheorem{proposition}[theorem]{Proposition}
\newtheorem{remark}[theorem]{Remark}

\newcommand{\C}{\mathscr{C}}

\renewcommand{\P}{\Phi}

\renewcommand{\S}{\mathcal{S}}

\newcommand{\z}{\mathbb{Z}}

\renewcommand{\r}{\mathbb{R}}
\newcommand{\cp}{\mathbb{C}} 

\newcommand{\ft}{\widehat}

\newcommand{\ga}{\gamma}

\newcommand{\ep}{\varepsilon}

\newcommand{\p}{\varphi}
\renewcommand{\d}{{\rm d}}
\newcommand{\ov}{\widebar}
\newcommand{\sign}{{\rm sign}}
\renewcommand{\sp}{\mathbb{S}}

\begin{document}


\title[]{The Hörmander--Bernhardsson function in higher dimensions}
\author[Gon\c{c}alves, Radchenko, Ramos]{Felipe Gon\c{c}alves, Danylo Radchenko, and Antonio Pedro Ramos}
\date{\today}
\subjclass[2020]{30D15, 33E30, 34A30, 41A44, 42A05}
\keywords{}
\address{IMPA - Instituto de Matemática Pura e Aplicada, Rio de Janeiro, 22460-320, Brazil.}
\email{goncalves@impa.br}

\address{Institut des Hautes \'Etudes Scientifiques, CNRS, Laboratoire Alexander Grothendieck,
35 route de Chartres, Bures-sur-Yvette 91440, France}
\email{danradchenko@gmail.com}

\address{IMPA - Instituto de Matemática Pura e Aplicada, Rio de Janeiro, 22460-320, Brazil.}
\email{antonio.ramos@impa.br}
\allowdisplaybreaks


\begin{abstract}
We study the problem of finding the norm of the point evaluation operator in the Paley--Wiener space $PW^{1}(\r^d)$, consisting of $d$-variable functions of spherical exponential type that are integrable on $\r^d$. The extremal functions can be taken radial, which naturally leads us to consider a related extremal problem in a weighted Paley--Wiener space of single-variable functions. We establish that the radial extremal function must satisfy a third-order linear ODE with polynomial coefficients for every $d \geq 1$, extending Gorbachev's recent odd-dimensional result. Along the way, we prove interpolation and reciprocal formulas involving the zeros of the extremizer. 
\end{abstract}


\maketitle

\section{Introduction} 
Let $PW^p(\r^d)$ be the space of functions $F\in L^p(\r^d)$ such that their distributional Fourier transform $\ft F$ is supported in the Euclidean unit ball. We use the following unitary normalization for the Fourier transform:
$$
\ft F(\xi) = (2\pi)^{-d/2} \int_{\r^d} F(x)e^{-i \xi \cdot x}\d x.
$$
We are interested in the norm of the point evaluation operator $F \in PW^1(\r^d) \mapsto F(0)$,
which by translation invariance can be equivalently thought of as the norm of the embedding $PW^1 (\r^d)\hookrightarrow PW^\infty (\r^d) $. That is, we seek to study the extremal problem
\begin{equation*}\label{def:const_def}
\C_d = \frac{|\sp^{d-1}|}{2} \sup_{F \in PW^1(\r^d)} \frac{\|F\|_{\infty}}{\|F\|_{1}},
\end{equation*}
where $\|\cdot\|_p:=\|\cdot\|_{L^p(\r^d)}$.
The normalization constant in the definition above is natural: this problem has a unique radial extremizer, which is the lift of a single-variable entire function belonging to a weighted Paley--Wiener space. More explicitly, after a series of routine reductions the problem can be reformulated as 
\begin{align}\label{def:hormconstant2}
\frac{1}{\C_d}=  \inf_{f\in PW_{d-1} \, : \,  f(0)=1} \|f\|_{|\cdot|^{d-1}},
\end{align}
where we denote by $PW_{d-1}$ the space of functions $f$ such that $\text{supp}(\ft f)\subset [-1,1]$ and $\| f\|_{|\cdot|^{d-1}} := \int_\r |f(x)||x|^{d-1}\d x<\infty$.

The one-dimensional version of this problem has gathered interest throughout the years, featuring, for example, in \cite{AKP,BCOS,CMS,Gor1,HB, LL, Lu}. The recent work of Bondarenko, Ortega-Cerdà, Radchenko, and Seip \cite{BORS1,BORS2} considerably improved the existing estimates for $\C_1$, as well as uncovered several properties of the extremal function. Different generalizations have also been studied: Dai, Gorbachev, and Tikhonov \cite{DGT} and Gorbachev \cite{Gor2} considered the problem \eqref{def:hormconstant2} for general $d$, while Bergman \cite{B} considered the one-dimensional question in the setting of de Branges spaces.

In the spirit of \cite{BORS2}, our aim here is to provide a good description of the extremizer of \eqref{def:hormconstant2}, which we denote by $\p_d$ and baptize here as the \textit{Hörmander--Bernhardsson function in dimension $d$}.
This function is of exponential type 1, even, real on the real line, and
has only real simple zeros $\{t_{d,n}\}_{n \neq 0}$, which we order in increasing fashion as
$$
\cdots<t_{d,-2}<t_{d,-1}<0<t_{d,1}<t_{d,2}<\cdots,
$$
with $t_{d,-n}=-t_{d,n}$. As such, the function $\p_d$ admits the Hadamard factorization
$$
    \p_d(z) = \prod_{n = 1}^\infty \bigg( 1- \frac{z^2}{t_{d,n}^2} \bigg),
$$
where we normalize $\p_d(0)=1$. First variation calculations also show that $\p_d$ is uniquely defined by the property
\begin{align}\label{id:integralinterp}
\int_{\r} f(x) \,\sign(\p_d) |x|^{d-1}\d x = {\C_d^{-1}} f(0)
\end{align} 
for all $f\in PW_{d-1}$. 
These properties about $\p_d$ were proved by Dai, Gorbachev, and Tikhonov in \cite{DGT}. Nevertheless, for the sake of completeness we include a brief derivation of these facts in Section \ref{sec:dim-reduction}.

In dimension $d = 1$, the authors of \cite{BORS2} found that after factoring the extremizer as $\p_1(z) = \P(z) \P(-z)$, the function $\P$ can be characterized as the solution to both a functional equation and an ordinary differential equation. A differential equation of $\p$ can be recovered as a symmetric square of the differential equation of $\P$. Gorbachev \cite{Gor2} recently extended the differential equation characterization of the H\"ormander--Bernhardsson function to odd dimensions. We introduce an independent method completing the gap, proving this description holds for all dimensions $d$.

\begin{theorem}\label{thm:ode}
The function $\p_d(z)$ satisfies the third-order differential equation
\begin{align*}
    z^{2d+2}f'''(z) +3(d+1)z^{2d+1}f''(z) +\big((d+1)(2d+1)z^{2d} -r(z) \big)f'(z) - \frac12 r'(z)f(z) = 0,
\end{align*}
where $r(z)$ is the degree $2d + 2$ polynomial given by
\begin{equation*}
    r(z) =  - z^{2d+2} +  \frac{d^2}{4\C_d^2} \sum_{j = 0}^{d } \alpha_{j} z^{2j}
\end{equation*}
and $\alpha_j$ is the coefficient of $z^{2j}$ in $\frac{1}{\p_d(z)^2}$ for  $j = 0, \ldots, d$. Equivalently, $\p_d$ is a solution to 
\begin{equation*}
    \partial_z\big[z^{d+1}\partial_z(z^{d+1}f'(z))\big]
    =\frac{1}{2f(z)} \partial_z\big[r(z) f(z)^2\big]
\end{equation*}
and
\begin{equation*}
    2f(z) z^{d+1} \partial_z( z^{d+1}f'(z) ) - (z^{d+1}f'(z))^2 = r(z) f(z)^2-\frac{d^2}{4\C_d^2}.
\end{equation*}
\end{theorem}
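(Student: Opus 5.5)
The plan is to prove the third (first-integral) form of the equation and derive the other two from it by differentiation. Set $u(z)=z^{d+1}\p_d'(z)$ and
$$
E(z)=2\p_d(z)\, z^{d+1}\big(z^{d+1}\p_d'(z)\big)' - u(z)^2 ,
$$
which is an even entire function of exponential type at most $2$. The claim is that $E(z)-r(z)\p_d(z)^2$ is the constant $-\frac{d^2}{4\C_d^2}$. Differentiating this identity and dividing by $2\p_d$ gives the second displayed equation. Expanding the derivatives then gives the third-order equation. So everything reduces to identifying $E$.

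\emph{Step 1: interpolation and reciprocal formulas from \eqref{id:integralinterp}.} Since $\sign(\p_d)$ is piecewise constant with jumps exactly at the $t_{d,n}$, I would apply \eqref{id:integralinterp} to the test functions $f(x)=\p_d(x)/(x-t_{d,n})$ and $f(x)=\p_d(x)^2/\big((x-w)(x-t_{d,n})\big)$, suitably symmetrized to stay even and in $PW_{d-1}$. In the latter case one first divides by a factor such as $\p_d(x)(x-t_{d,m})^{-1}$ to keep the type $\le 1$. The aim is an interpolation formula expressing any suitable $g$ of type $2$ with enough decay against $|x|^{d-1}$ through the data $\{g(t_{d,n}), g'(t_{d,n})\}$ and $g(0)$. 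The weights should be $|t_{d,n}|^{d-1}/\p_d'(t_{d,n})^2$-type quantities together with a term $\C_d^{-1}$. As a by-product this gives reciprocal relations of the form
$$
\p_d'(t_{d,n})^2\,|t_{d,n}|^{2d+2}=\frac{d^2}{4\C_d^2}\cdot(\text{explicit normalizing factor}).
$$
This is the analogue of the \cite{BORS2} relation $|\p_1'(t_n)|\asymp$ const. Evaluated at a zero $t=t_{d,n}$, $E(t)=-u(t)^2=-t^{2d+2}\p_d'(t)^2$. So the reciprocal formula says exactly that $E=-\frac{d^2}{4\C_d^2}$ on the zero set, which is what the target identity predicts since $\p_d^2$ vanishes there.

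\emph{Step 2: Liouville/Paley--Wiener argument.} Consider $H(z)=\big(E(z)+\frac{d^2}{4\C_d^2}\big)/\p_d(z)^2$. By Step 1 and the simplicity of the zeros, the numerator vanishes at each $t_{d,n}$. Its derivative there should also vanish by a second interpolation identity, obtained by differentiating the test function in $w$. Hence $H$ is meromorphic with poles only possibly at $0$. More precisely, $z^{2d+2}$ factors must be tracked, since $E$ has a zero of order $2d+2$-ish at the origin while $\p_d(0)=1$. I would then show that $H$ has at most polynomial growth of degree $2d+2$. Along the imaginary axis this uses the Hadamard product and the known asymptotics $t_{d,n}\sim \pi n+\text{const}$. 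On the real axis it uses the decay of $\p_d$, comparing $E$ and $\p_d^2$ through the interpolation formula. By Phragmén--Lindelöf, $H$ is then an even polynomial of degree $\le 2d+2$. Its leading coefficient $-(-1)=1$ is read off from the asymptotics $\p_d(iy)\sim e^{y}/y^{(d+1)/2}$-type behaviour. Its low-order coefficients are forced by matching the Taylor expansion at $0$. There $E(z)=O(z^{2d+2})$, so $H(z)=-\frac{d^2}{4\C_d^2}\cdot\big(-\p_d(z)^{-2}\big)+O(z^{2d+2})$. This produces exactly the $\alpha_j$, $j\le d$, and gives $H=r$.

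\emph{Main obstacle.} The hard part is Step 1 together with the growth control in Step 2. One needs the precise reciprocal formula for $\p_d'(t_{d,n})$, including the vanishing of the derivative of the numerator at the zeros. One also needs to justify the required test functions in $PW_{d-1}$ when $d$ is large, where integrability against $|x|^{d-1}$ forces extra division by zeros of $\p_d$. I would first try to obtain everything from a single generating identity: apply \eqref{id:integralinterp} to
$$
x\mapsto \frac{\p_d(x)\big(\p_d(x)-\p_d(w)\big)}{(x-w)\,\prod_{k=1}^{m}(x^2-t_{d,k}^2)}\cdot\prod_{k=1}^m(w^2-t_{d,k}^2),
$$
with $m\approx d/2$, and then compare residues in $w$. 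This avoids parity-dependent hypergeometric computations and so works uniformly in $d$, unlike Gorbachev's odd-dimensional argument.
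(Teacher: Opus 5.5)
Your overall route works and differs genuinely from the paper's, but as written it has gaps at three points: the key input is not derived, the growth control would fail as described, and the sign of the leading coefficient is wrong.

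\emph{Comparison with the paper.} The paper derives the reciprocal formulas and splits $\varphi=\Phi_+\Phi_-$. It then proves a second-order equation for $\Phi_\pm$ with an unknown entire $q$, shows $q$ is a polynomial (Lemma~\ref{lem:q_poly}), takes the symmetric square and integrates. Only then does it identify $r$, by an inductive Taylor comparison (Lemma~\ref{lem:r_poly}). You go straight for the first integral, and your derivation of the other two forms from it is correct. The only input you need from the extremal problem is the value of $t_n^{2d+2}\varphi'(t_n)^2$. After that, $H=(E+\frac{d^2}{4\C_d^2})/\varphi^2$ becomes a polynomial by the same log-derivative and Phragm\'en--Lindel\"of estimates the paper applies to $q$, now applied to $\varphi$ itself. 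The coefficients of $z^{2j}$, $j\le d$, then drop out at once from $E=O(z^{2d+2})$, with no induction. This is shorter. The paper's longer route buys extra structure: the interpolation and reciprocal formulas, the factorization, and the second-order equation for $\Psi_\pm$ of which Theorem~\ref{thm:ode} is the symmetric square.

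\emph{Gap in Step 1.} This is the one place where extremality enters, and it is not carried out. The constant is left as an unspecified ``normalizing factor''. The type-two Hermite interpolation and the residue identity you propose are neither established nor needed. What is required is exactly \eqref{id:derivphi}, namely $\varphi'(t_n)=\frac{d}{2\C_d}(-1)^n t_n^{-d-1}$.

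Applying \eqref{id:integralinterp} directly to $x\varphi(x)/(x^2-t_n^2)$, your symmetrized $\varphi(x)/(x-t_n)$, only gives $0=0$. Instead:
\begin{itemize}
\item apply \eqref{id:firstvar} to $f'$ for odd $f$;
\item integrate by parts on each $(t_{n-1},t_n)$, which is where the jumps of $\sign\varphi$ produce point values;
\item apply \eqref{id:firstvar} once more to $f(x)/x$.
\end{itemize}
This yields Proposition~\ref{prop:oddsumm}. Taking $f(z)=z\varphi(z)/(z^2-t_n^2)$ then gives the formula, so the factor is $1$.

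No second identity is needed for the derivative of the numerator. Identically, $E'=2\varphi\,\partial_z[z^{d+1}\partial_z(z^{d+1}\varphi')]$. So $E+\frac{d^2}{4\C_d^2}$ vanishes to second order at every $t_n$ automatically, and $H$ is entire. There is no pole at $0$, since $\varphi(0)=1$.

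\emph{Gap in Step 2.} The growth control as you describe it would fail. A polynomial bound on $i\r$ says nothing about an even function of exponential type; $\cosh z$ is a counterexample. On $\r$ one cannot estimate $H$ at all, because $\varphi^{-2}$ blows up at the zeros, and the decay of $\varphi$ makes this worse, not better.

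Work instead in $\pi/4\le|\arg z|\le 3\pi/4$. There $\mathrm{Re}(z^2)\le 0$ gives $|\varphi|\ge 1$ and $|z^2-t_n^2|^2\ge |z|^4+t_n^4$, so $n(r)\lesssim r$ bounds $\varphi'/\varphi$ and $(\varphi'/\varphi)'$. Writing
$H=2(d+1)z^{2d+1}\frac{\varphi'}{\varphi}+2z^{2d+2}\frac{\varphi''}{\varphi}-z^{2d+2}\big(\frac{\varphi'}{\varphi}\big)^2+\frac{d^2}{4\C_d^2}\varphi^{-2}$
gives $|H|\lesssim 1+|z|^{2d+2}$ there. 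Phragm\'en--Lindel\"of in the two remaining sectors of opening $\pi/2$ finishes the argument, since $H$ is an entire quotient of exponential-type functions and hence of exponential type.

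\emph{Leading coefficient.} It is $-1$, not $1$. From $n(r)=r/\pi+o(r)$ for the Cartwright class one gets $\frac{\varphi'}{\varphi}(iy)\to -i$ and $\big(\frac{\varphi'}{\varphi}\big)'(iy)=O(1/y)$. Hence $H(iy)/(iy)^{2d+2}\to 2(-1)-(-1)=-1$. The asymptotics $\varphi(iy)\sim e^{y}y^{-(d+1)/2}$ and $t_n=\pi n+\mathrm{const}$ that you invoke are not available at this stage, and they are not needed. Note also that \eqref{id:derivphi} suggests the power $y^{-(d+1)}$ rather than $y^{-(d+1)/2}$.
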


Gorbachev's method in \cite{Gor2} consists of introducing an analytic signature and using a Fourier spectral gap argument, but it relies on the fact that $|x|^{d-1}$ is a polynomial, which only happens when $d$ is odd. Our approach goes through an alternative route, deriving an interpolation formula out of the Euler--Lagrange equation for the variational problem \eqref{def:hormconstant2}. In loose terms, we then apply the interpolation formula to $z^{-d -1}$, suitably modified so that it lies in the space. This allows us to obtain what we call here \textit{reciprocal formulas}, and they suggest the convenient factorization of $\p_d$ as $\p_d (z) = \P_+ (z) \P_-(z)$. These factorizations will be different according to the parity of the dimension, and they coincide with the factorizations of \cite{BORS2,DGT} only when $d$ is odd. Our derivation of the differential equation then follows a path which is at first similar, since our reciprocal formula is equivalent to the quadratic functional identity of \cite[Theorem 1(a)]{Gor2}, but then diverges again, since we use different methods to pin down the polynomial coefficients appearing in the equation. 

It is also worth pointing out that all results in this manuscript were obtained in late 2025, but we had intended to derive the asymptotics of $\C_d$ from our methods before posting. The appearance of Gorbachev's paper \cite{Gor2} prompted us to post now. Nevertheless, we believe our techniques are more natural and direct.

\section{Preliminaries}\label{sec:dim-reduction}

We discuss the dimension reduction of the extremal problem and a few basic properties of the H\"ormander--Bernhardsson function. Most of the properties here were proven elsewhere (e.g., in \cite{BCOS,DGT,Gor2,LL}), but we include this brief discussion to make the paper self-contained.

\subsection{Dimension Reduction}
In the introduction, we asserted that we could reduce the point evaluation problem in several dimensions to a one-dimensional problem. The idea is to show that radial extremizers exist, leading to equation \eqref{def:hormconstant2}. 
First, we observe that
\begin{align}\label{def:hormconstant}
    \frac{1}{\C_d}= \frac2{|\sp^{d-1}|} \inf_{F\in PW^1(\r^d) \, : \,  F(0)=1} \|F\|_{1}.
\end{align}
Indeed, let $c^{-1}$ denote the quantity on the right hand side above. It is clear that if $G(x)=F(x+b)/F(b)$, where $|F(b)|=\|F\|_\infty$, then
$G(0)=1$ and $c\geq \frac{|\sp^{d-1}|}{2}  \frac1{\|G\|_1} = \frac{|\sp^{d-1}|}{2} \frac{\|F\|_{\infty}}{\|F\|_{1}}$. Thus, $c\geq \C_d$. On the other hand, given $G\in PW^1(\r^d)$
with $G(0)=1$ we have $\C_d \geq \frac{|\sp^{d-1}|}{2} \frac{\|G\|_{\infty}}{\|G\|_{1}} \geq \frac{|\sp^{d-1}|}{2} \frac{1}{\|G\|_{1}}$, hence $\C_d\geq c$.

A standard argument shows that extremizers of \eqref{def:hormconstant} must exist. Next, we set out to prove such extremizers can be taken real and radial. If $F$ is an extremizer, it must be real, otherwise taking $\frac12 (F(z) + \overline{F(\bar z)})$ would produce a strictly smaller $L^1$-norm. This latter function is real entire and belongs to  $PW^1(\r^d)$. 
First variation shows that 
$$
\int_{\r^d} \sign(F) G(x) \d x = 0
$$
for all $G\in PW^1(\r^d)$ with $G(0)=0$. This shows that $\sign(F)$ cannot be constant. We claim \eqref{def:hormconstant} admits radial extremizers. Indeed, let
$$
F_0(x) = \int_{O(d)} F(\rho x) \d \mu(\rho),
$$
where $\mu$ is the Haar measure on the orthogonal group $O(d)$. We obtain that $F_0(0)=1$ and $\|F_0\|_1 \leq \|F\|_1$. 
Hence $\|F_0\|_1 = \|F\|_1$ and $F_0$ is a radial extremizer.

It is routine to show that a radial function $F\in PW^1(\r^d)$ can be uniquely written as
$F(x)=f(|x|)$ for some even function $f\in PW^1(\r)$ such that 
$$
\|f\|_{|\cdot|^{d-1}}=\int_\r |f(x)||x|^{d-1}\d x = \frac{2}{|\sp^{d-1}|} \int_{\r^d} |F(x)|\d x <\infty.
$$
We will write, for short notation, $PW_{d-1}$ as the space of functions $f:\r\to\cp$ such that $\text{supp}(\ft f)\subset [-1,1]$ and $\int_\r |f(x)||x|^{d-1}\d x<\infty$. In particular, we conclude that
\begin{align}
\frac{1}{\C_d}=  \inf_{f\in PW_{d-1} \, : \,  f(0)=1} \|f\|_{|\cdot|^{d-1}},
\end{align}
which is exactly the assertion \eqref{def:hormconstant2} made in the introduction. Not only that, radial extremizers of \eqref{def:hormconstant} and \eqref{def:hormconstant2} are mapped into each other by the relation 
$$
F(x)=f(|x|).
$$

\subsection{Basic properties of the H\"ormander--Bernhardsson function}We now show that there is a unique extremizer of \eqref{def:hormconstant2}. Moreover, this function has exponential type 1, is even, real entire, and has real simple zeros. 

Let $f$ be any extremizer of  \eqref{def:hormconstant2}. First we note that $f$ cannot have exponential type $r < 1$, since otherwise the scaled version $f(z/r)$ would still be admissible for \eqref{def:hormconstant2}, while having a strictly smaller integral than $f$, a contradiction.
As before, considering $\frac12 (f(z) + \overline{f(\bar z)})$ leads us to conclude that $f$ must be real entire.

First variation again shows that 
\begin{align}\label{id:firstvar}
    \int_{\r} \sign(f) g(x) |x|^{d-1}\d x = \C_d^{-1} g(0)
\end{align}
for every $g\in PW_{d-1}$. Any function~$f$ satisfying the above property (together with $f(0)=1$) is a minimizer of \eqref{def:hormconstant2}. Indeed, if $f_1$ satisfies the above condition then
\begin{align*}
\int_\r |f(x)||x|^{d-1}\d x & \geq \int_\r f(x)\, \sign(f_1)|x|^{d-1}\d x  \\
&= \int_\r (f(x) -f_1(x))\,\sign(f_1)|x|^{d-1}\d x + \int_\r |f_1(x)||x|^{d-1}\d x \\
& = \int_\r |f_1(x)||x|^{d-1}\d x,
\end{align*}
because $\int_\r (f(x) -f_1(x))\sign(f_1)|x|^{d-1}\d x = \C_d^{-1} (f(0)-f_1(0))=0$. If $f_1,f_2$ are minimizers, then $(f_1+f_2)/2$ must also be an extremizer, in particular we must have that $\sign(f_1)=\sign(f_2)$ (otherwise $(f_1+f_2)/2$ would have smaller $ \|\cdot\|_{|\cdot|^{d-1}}$-norm). We conclude all minimizers have the same sign changes, hence these sign changes are uniquely defined.

Not only that, the extremizer itself is unique. Indeed, picking any other extremizer $f_1$, the condition $\sign f = \sign f_1$ implies $f \cdot f_1 \geq 0$, so by Krein's factorization theorem $f(z) f_1(z) = h(z) \overline{h(\bar z)}$ for a function $h$ of exponential type at most $1$. One can then take $h(z)/h(0)$, and apply the AM--GM inequality to get
$$
    \int_\r |h(x)/h(0)| |x|^{d-1} \d x =  \int_\r \sqrt{|f(x)f_1(x)|} |x|^{d-1}\d x \leq \frac12 \int (|f(x)| + |f_1(x)|) |x|^{d-1} \d x = \frac{1}{\C_d},
$$
since $|h(0)|^2 = f(0) f_1(0) =1$. But $h(z)/h(0)\in PW_{d-1}$ and $\C_d^{-1}$ is a minimum, so the equality  $|f| \equiv |f_1|$ must hold almost everywhere. The sign condition and continuity then allow us to conclude $f \equiv f_1.$ 

From uniqueness, we can conclude $f$ is even and has only real simple zeros. Since $g(z) = {(f(z) + f(-z))}/{2}$ is also an extremizer, it follows $f$ must be even. Now, if $z=a$ is a real zero of order $2k$ then $f(z)/(1-z/a)^{2k}$ has the same sign changes as $f$, so it must be an extremizer. We perform a similar trick if $z=a$ is a real zero of order $2k+1$ with $k \geq 1$, dividing $f(z)$ by $(1-z/a)^{2k}$ so that a simple zero remains at $z =a$. Finally, if $z=a$ is a complex zero of order $k$, then so is $z=\bar a$ since $f$ is real entire, so that $f(z)(1-z/a)^{ -k}(1-z/{\ov a})^{-k}$ has the same sign changes as $f$ and must be an extremizer. Since the result of these procedures is always an extremizer of \eqref{def:hormconstant2}, this would contradict uniqueness, and so it follows $f$ has only real simple zeros. 

\section{Summation and Interpolation Formulas}

Here we lay the groundwork: the summation and interpolation formulas which will allow us to eventually obtain the differential equation of Theorem \ref{thm:ode}. From now on, we shall drop all dependencies on $d$ when convenient and we shall use the notation
$$
c_d:= \frac{d}{4\C_d}. 
$$ 

\subsection{Summation formula for odd functions}
Let $f\in PW_{d-1}$ be odd. Applying \eqref{id:firstvar} and integration by parts, we obtain
\begin{align*}
    \C_d^{-1}f'(0) & = 2\int_0^\infty f'(x)\, \sign(\p(x))|x|^{d-1} \d x \\
    & = 4 \sum_{n\geq 1}(-1)^{n+1} t_n^{d-1}f(t_n) -  2(d-1)\int_0^\infty x^{-1}f(x)\, \sign(\p(x))|x|^{d-1} \d x \\
    & = 4 \sum_{n\geq 1}(-1)^{n+1} t_n^{d-1}f(t_n) -(d-1)\C_d^{-1}f'(0).
\end{align*}
We proved the following proposition.

\begin{proposition}\label{prop:oddsumm}
For every odd $f\in PW_{d-1}$ we have
$$
\sum_{n\geq 1} (-1)^{n+1}t_n^{d-1} f(t_n) = c_d f'(0).
$$
\end{proposition}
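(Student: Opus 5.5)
The plan is to apply the first variation identity \eqref{id:firstvar} not to $f$ itself but to a function built from $f'$ and $f(x)/x$, and then integrate by parts against the piecewise constant weight $\sign(\p_d)$. One caveat first: the interchange of derivative and integral, and the membership of $f'$ in $PW_{d-1}$, need to be justified; I address this at the end.

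\textbf{Step 1: membership.} Given an odd $f\in PW_{d-1}$, I first check that $f'$ and $x^{-1}f(x)$ are again in $PW_{d-1}$. Both are entire, because $f(0)=0$, and both have Fourier transform supported in $[-1,1]$.
\begin{itemize}
\item For $f'$, the weighted integrability follows from the Plancherel--Pólya/Bernstein type inequality $\int |f'(x)||x|^{d-1}\,\d x \lesssim \int |f(x)|(1+|x|)^{d-1}\,\d x$. This is standard for bandlimited functions with polynomial weights: write $f'=f*\psi$ with $\psi$ a Schwartz function whose Fourier transform equals $i\xi$ on $[-1,1]$.
\item Near the origin, $|f(x)|$ is controlled by continuity, so there is no problem there.
\item For $x^{-1}f(x)$, the weight $|x|^{d-2}$ is integrable near $0$ when $d\ge 2$ (for $d=1$ the function is bounded near $0$), and at infinity $|x^{-1}f||x|^{d-1}\le |f||x|^{d-1}$.
\end{itemize}
Both functions are even.

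\textbf{Step 2: the main identity.} Apply \eqref{id:firstvar} to $g=f'$ and use evenness:
\[
\C_d^{-1}f'(0)=2\int_0^\infty f'(x)\,\sign(\p(x))\,x^{d-1}\,\d x.
\]
On each interval $(t_{n-1},t_n)$ (with $t_0=0$) the sign equals $(-1)^{n+1}$, since $\p(0)=1$ and the zeros are simple. Integrating by parts on each interval, the boundary terms telescope to $2\sum_{n\ge1}(-1)^{n+1}t_n^{d-1}f(t_n)$. Doubling gives the displayed sum with coefficient $4$. The leftover integral is
\[
-(d-1)\int_0^\infty x^{d-2}f(x)\,\sign\p\,\d x.
\]
By evenness of $x^{-1}f$ and \eqref{id:firstvar}, this equals $\tfrac{d-1}{2}\C_d^{-1}\lim_{x\to0}f(x)/x=\tfrac{d-1}{2}\C_d^{-1}f'(0)$. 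Rearranging, $d\,\C_d^{-1}f'(0)=4\sum(-1)^{n+1}t_n^{d-1}f(t_n)$, which is the claim with $c_d=d/(4\C_d)$.

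\textbf{Step 3: the main obstacle.} The hard part is justifying the summation by parts over infinitely many intervals. One must show that the boundary term at $x=R$ vanishes along a sequence $R\to\infty$, and that the alternating series converges. I would do both by truncating at $R=t_N$, where the boundary terms are exactly those of the partial sums.

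Since $f'(x)x^{d-1}$ and $f(x)x^{d-2}$ are integrable on $(0,\infty)$, the truncated integrals converge as $N\to\infty$. The remaining boundary term is $f(t_N)t_N^{d-1}$, so I need $f(x)x^{d-1}\to0$. This follows because $(f(x)x^{d-1})'$ is integrable, so the limit exists, and the limit must be $0$ because $f(x)x^{d-1}$ is itself integrable. Hence the partial sums converge to the stated limit, and the series is interpreted in this sense. This also shows that the $d=1$ case needs no separate argument.
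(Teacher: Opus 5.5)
Your proposal is correct and follows essentially the same route as the paper: apply \eqref{id:firstvar} to the even function $f'$, integrate by parts on the nodal intervals $(t_{n-1},t_n)$ of $\p$, and apply \eqref{id:firstvar} again to the even function $f(x)/x$. The paper does this in three lines, whereas you also justify that $f'$ and $f(x)/x$ lie in $PW_{d-1}$, that the partial sums converge, and that the boundary term $f(t_N)t_N^{d-1}$ vanishes, all of which the paper leaves implicit.

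Two small slips, neither affecting the result. The leftover term $-(d-1)\int_0^\infty x^{d-2}f(x)\,\sign(\p(x))\,\d x$ equals $-\tfrac{d-1}{2}\C_d^{-1}f'(0)$, not $+\tfrac{d-1}{2}\C_d^{-1}f'(0)$. Also, the interior boundary terms double rather than telescope. Your final identity $d\,\C_d^{-1}f'(0)=4\sum_{n\ge1}(-1)^{n+1}t_n^{d-1}f(t_n)$ is correct.
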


\subsection{Interpolation} Out of the summation formula, we seek to prove an interpolation result involving the zeros of the extremal function. 

Define $\p_0(z):=z\p(z)$. Consider the odd function $f(z)=\p_0(z)/(z^2-t_n^2)$ and apply Proposition \ref{prop:oddsumm} to get
$$
c_d(-1/t_n^2) = (-1)^{n+1}t_n^{d-1}\p_0'(t_n)/(2t_n).
$$
This is
\begin{align}\label{id:derivphi}
\p'(t_n)=2c_d (-1)^n/t_n^{d+1}, \qquad n \geq 1.
\end{align}
To treat $n \leq -1$ one can simply use $\p'(-t_n) = -\p'(t_n)$, which holds by the evenness of $\p$. 

Now let $f\in PW_{d-1}$ be an even function and consider the following odd function of $w$
$$
w\mapsto g_z(w)=\frac{zf(z)\p_0(w)-wf(w)\p_0(z)}{w^2-z^2} \in PW_{d-1}.
$$
We can then apply Proposition \ref{prop:oddsumm} to get
$$
\sum_{n\geq 1} (-1)^{n}t_n^{d}\frac{f(t_n)\p_0(z)}{t_n^2-z^2} = c_d \frac{zf(z)-f(0)\p_0(z)}{-z^2}.
$$
Rearranging terms, we have just shown the following proposition.

\begin{proposition}\label{prop:eveninterp}
For every even $f\in PW_{d-1}$, we have
$$
\frac{f(z)}{\p(z)} = f(0) +\frac{z^2}{c_d}\sum_{n\geq 1} \frac{(-1)^{n}t_n^{d}f(t_n) }{z^2-t_n^2}.
$$
\end{proposition}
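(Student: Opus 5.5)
The idea is to apply Proposition \ref{prop:oddsumm} to a single auxiliary odd function of a new variable $w$, with $z$ held fixed as a parameter. Fix $z\in\cp$ with $z\neq \pm t_n$ and $z \neq 0$; the remaining values will be handled at the end by continuity, since both sides are meromorphic in $z$. Given an even $f\in PW_{d-1}$, recall $\p_0(w)=w\p(w)$ and set
$$
g_z(w)=\frac{zf(z)\p_0(w)-wf(w)\p_0(z)}{w^2-z^2}.
$$

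First I check that $g_z$ is admissible.
\begin{itemize}
\item[(i)] $g_z$ is odd in $w$, because $\p_0$ and $wf(w)$ are odd while $w^2-z^2$ is even.
\item[(ii)] $g_z$ is entire. The numerator vanishes at $w=z$ trivially, and at $w=-z$ because of oddness: its value there is $-zf(z)\p_0(z)+zf(z)\p_0(z)=0$.
\item[(iii)] $g_z$ has exponential type at most $1$, since $f$ and $\p$ do.
\item[(iv)] $g_z$ is integrable against $|w|^{d-1}$. For large $|w|$ we have $|g_z(w)|\lesssim |\p(w)|/|w| + |f(w)|/|w|$, and the $f$-term is fine. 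The $\p$-term needs $\int|\p(w)||w|^{d-2}\,\d w<\infty$, which follows from $\p\in PW_{d-1}$.
\end{itemize}
By Paley--Wiener, (iii) and (iv) give that the Fourier transform of $g_z$ is supported in $[-1,1]$. Hence $g_z\in PW_{d-1}$.

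Next I evaluate both sides of Proposition \ref{prop:oddsumm}. At $w=t_n$ we have $\p_0(t_n)=0$, so
$$
g_z(t_n)=-\frac{t_nf(t_n)\p_0(z)}{t_n^2-z^2}.
$$
For the derivative at $0$, note that $\p_0(w)=w+O(w^3)$ and $wf(w)=f(0)w+O(w^3)$. Therefore
$$
g_z'(0)=\frac{zf(z)-f(0)\p_0(z)}{-z^2}.
$$
Substituting into Proposition \ref{prop:oddsumm} gives
$$
\sum_{n\ge1}(-1)^n t_n^{d}\frac{f(t_n)\p_0(z)}{t_n^2-z^2}=-c_d\,\frac{zf(z)-f(0)\p_0(z)}{z^2}.
$$
Divide by $z\p(z)=\p_0(z)$ and solve for $f(z)/\p(z)$; a sign flip turns $t_n^2-z^2$ into $z^2-t_n^2$ and yields exactly the stated identity. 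The identity then extends to all $z$ by continuity, the poles at $\pm t_n$ matching on both sides.

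The main obstacle is step (iv), that $g_z$ indeed lies in $PW_{d-1}$, together with the absolute convergence of the series that Proposition \ref{prop:oddsumm} implicitly uses. For the series, \eqref{id:derivphi} and the growth of the zeros control $t_n^d f(t_n)/(t_n^2-z^2)$. I would justify this via the integration-by-parts derivation, which requires only the decay of $g_z$ times $|x|^{d-1}$ between consecutive zeros.
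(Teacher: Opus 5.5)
Your proposal is correct and follows the paper's proof exactly: the paper uses the same auxiliary odd function $g_z(w)$, applies Proposition \ref{prop:oddsumm} to it, and rearranges. Your checks (i)--(iv) and your computations of $g_z(t_n)$ and $g_z'(0)$ fill in details the paper leaves implicit. One quibble concerns your closing paragraph: no separate convergence argument is needed, since Proposition \ref{prop:oddsumm} is already stated for every odd function in $PW_{d-1}$, and \eqref{id:derivphi} would not control $f(t_n)$ in any case (absolute convergence, if you want it, comes from the separation of the $t_n$ together with the Plancherel--P\'olya inequality, as noted in the paper's appendix).
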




Note that if we put $t_0=0$ then the above formula is simply Lagrange interpolation
$$
f(z)=\sum_{n\in \z} f(t_n)\frac{\p_0(z)}{\p_0'(t_n)(z-t_n)},
$$
where the sum is taken symmetrically. Given a generic function $f\in PW_{d-1}$, one can split it into an even and odd part, apply the above formula for the even part and also the odd part divided by $z$ and obtain the following result.  
 
\begin{proposition}\label{prop:geninterp}
For every $f\in PW_{d-1}$, we have
$$
\frac{f(z)}{\p(z)}  = {f(0)+f'(0)z}+ \frac{z^2}{2c_d}\sum_{n\geq 1}(-1)^n t_n^{d-1}\bigg[\frac{f(t_n)}{z-t_n}-\frac{f(-t_n)}{z+t_n}\bigg].
$$
\end{proposition}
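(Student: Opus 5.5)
Write $f=f_e+f_o$ with $f_e(z)=\tfrac12(f(z)+f(-z))$ and $f_o(z)=\tfrac12(f(z)-f(-z))$. Both lie in $PW_{d-1}$, since reflection preserves the weighted norm and the spectral support. The even part is handled directly by Proposition \ref{prop:eveninterp}, which gives
$$
\frac{f_e(z)}{\p(z)} = f(0) + \frac{z^2}{c_d}\sum_{n\geq 1}\frac{(-1)^n t_n^{d} f_e(t_n)}{z^2-t_n^2}.
$$
For the odd part, I would reuse the construction from the proof of Proposition \ref{prop:eveninterp}, now applied to $h(z)=f_o(z)/z$. This function is even and entire with $h(0)=f'(0)$. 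It belongs to $PW_{d-1}$: its spectral support is unchanged, and integrability of $|h(x)||x|^{d-1}$ holds near infinity because $|h(x)|\le |f_o(x)|$ for $|x|\ge1$, while near $0$ it holds because $h$ is bounded there. So Proposition \ref{prop:eveninterp} applies to $h$. Multiplying through by $z$ gives
$$
\frac{f_o(z)}{\p(z)} = f'(0)z + \frac{z^3}{c_d}\sum_{n\ge1}\frac{(-1)^n t_n^{d-1} f_o(t_n)}{z^2-t_n^2}.
$$

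Adding the two expansions, I then use two partial-fraction identities to rewrite each coefficient in terms of $1/(z\mp t_n)$:
$$
\frac{t_n}{z^2-t_n^2}=\frac12\Big[\frac1{z-t_n}-\frac1{z+t_n}\Big],\qquad \frac{z}{z^2-t_n^2}=\frac12\Big[\frac1{z-t_n}+\frac1{z+t_n}\Big].
$$
The $n$-th term becomes
$$
\frac{z^2(-1)^n t_n^{d-1}}{2c_d}\Big[\frac{f_e(t_n)+f_o(t_n)}{z-t_n}-\frac{f_e(t_n)-f_o(t_n)}{z+t_n}\Big].
$$
Since $f_e(t_n)+f_o(t_n)=f(t_n)$ and $f_e(t_n)-f_o(t_n)=f(-t_n)$, this is exactly the summand in the statement.

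Merging the two series termwise is legitimate because each one converges; that is part of the conclusion of Proposition \ref{prop:eveninterp}. The only step needing any care is the membership $f_o(z)/z\in PW_{d-1}$, and that is the elementary argument given above. So I expect no real obstacle.
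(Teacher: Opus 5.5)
Your proof is correct and takes the same route the paper indicates: split $f$ into even and odd parts, apply Proposition~\ref{prop:eveninterp} to $f_e$ and to $f_o(z)/z$, and recombine the two series with the partial-fraction identities. You also supply details the paper leaves implicit, namely that $f_o(z)/z\in PW_{d-1}$ and the bookkeeping that turns the coefficients into $f(t_n)$ and $f(-t_n)$, and these are all in order.
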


\begin{remark}
It is important to notice that \emph{all} the results and discussion up to (and including) this section works as it is when $d>0$ is any \emph{real} number. 
\end{remark}

\subsection{Reciprocal formulas: Odd \texorpdfstring{$d$}{d}}
Expand the $1/\p$ in its Taylor series about the origin:
$$
\frac{1}{\p(z)} = \sum_{j\geq 0} \ga_j z^j.
$$
Recall that since $\p$ is even we have $\ga_{2j+1}=0$ for all $j$. 
Now let
$$
f(z)=\frac{1-\p(z)\sum_{j=0}^{d-1} \ga_j z^j}{z^{d+1}}
$$
and observe that $f$ is even and that $f\in PW_{d-1}$. We then apply Proposition \ref{prop:eveninterp} to get
\begin{align}\label{id:sumphi}
\frac{1}{z^{d+1}\p(z)} - \sum_{j=0}^{d-1} \ga_j z^{j-d-1} & = \ga_{d+1} +\frac{z^2}{c_d}\sum_{n\geq 1} \frac{(-1)^{n} }{t_n(z^2-t_n^2)}.
\end{align}
We can put $z=iy$ and let $y\to\infty$ to obtain that
$$
\sum_{n\geq 1} \frac{(-1)^{n+1}}{t_n} = c_d \ga_{d+1},
$$
since by standard alternating series estimates, for any fixed $N$,
$$
    \Bigg|\sum_{n = 1}^\infty(-1)^n \frac{1}{t_n}  - \sum_{n = 1}^\infty (-1)^n \frac{y^2}{t_n(t_n^2 + y^2)}\Bigg| \leq \Bigg|\sum_{n = 1}^N(-1)^n \frac{1}{t_n}  - \sum_{n = 1}^N (-1)^n \frac{y^2}{t_n(t_n^2 + y^2)}\Bigg| + \frac{C}{t_{N+1}}, 
$$
so one can send $y \to \infty$ first and then $N\to \infty$ to conclude the desired limit holds.

Plugging this identity back into identity \eqref{id:sumphi} and using that 
$$
\frac{z^2(-1)^{n} }{t_n(z^2-t_n^2)} = \frac{(-1)^nt_n}{z^2-t_n^2} + \frac{(-1)^{n} }{t_n} =  \frac12\left(\frac{1}{z-(-1)^n t_n} - \frac{1}{z+(-1)^n t_n}\right)+\frac{(-1)^{n} }{t_n}
$$
we obtain the following result.
\begin{proposition}\label{prop:reciprocalformulaodd}
For odd $d$ we have
\begin{align*}
\frac{2c_d}{z^{d+1}\p(z)} & = 2 c_d \sum_{j=0}^{d} \ga_j z^{j-d-1}  + 2\sum_{n\geq 1} \frac{(-1)^nt_n}{z^2-t_n^2}  \\
& = 2 c_d \sum_{j=0}^{d} \ga_j z^{j-d-1} + \sum_{n\geq 1} \frac{1}{z-(-1)^n t_n} -  \sum_{n\geq 1} \frac{1}{z+(-1)^n t_n}.
\end{align*}
\end{proposition}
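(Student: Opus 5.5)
The argument is essentially already assembled in the text preceding the statement; the plan is to organize it into three steps.

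\emph{Step 1: set up the test function.} For odd $d$, $d+1$ is even, so
$$
f(z)=\frac{1-\p(z)\sum_{j=0}^{d-1}\ga_j z^j}{z^{d+1}}
$$
is even, since only even $\ga_j$ are nonzero. It is entire, because $1-\p(z)\sum_{j<d}\ga_jz^j=O(z^{d+1})$. This uses $\ga_d=0$: the truncation at $j=d-1$ agrees with the one at $j=d$, so the numerator vanishes to order $d+1$. The function $f$ has exponential type $1$. For integrability against $|x|^{d-1}$, the term $1/x^{d+1}$ is integrable at infinity against $|x|^{d-1}$, since $|x|^{-2}$ is integrable there. Each term $\p(x)x^{j-d-1}$ is bounded by $|\p(x)|\,|x|^{j-d-1}$ with $j\le d-1$, so it is bounded by $|\p(x)||x|^{-2}$ at infinity. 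Hence $f\in PW_{d-1}$, using that $\p\in PW_{d-1}$. We also have $f(t_n)=t_n^{-d-1}$ and $f(0)=-\ga_{d+1}$, the latter up to sign conventions read off from the Taylor expansion.

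\emph{Step 2: apply Proposition~\ref{prop:eveninterp} and determine the constant.} Proposition~\ref{prop:eveninterp} gives \eqref{id:sumphi} after dividing by $\p$. Using $(-1)^nt_n^d\,t_n^{-d-1}=(-1)^n/t_n$, we then evaluate at $z=iy$ with $y\to\infty$. On the left, $1/((iy)^{d+1}\p(iy))\to0$, because $\p$ grows exponentially on the imaginary axis; indeed $\p(iy)=\prod(1+y^2/t_n^2)\ge1$. The negative powers $z^{j-d-1}$ also tend to $0$. On the right, the alternating series argument above shows
$$
\sum_{n\ge1}\frac{(-1)^{n}y^2}{t_n(t_n^2+y^2)}\to\sum_{n\ge1}\frac{(-1)^n}{t_n}.
$$
Together these give $\sum_{n\ge1}(-1)^{n+1}/t_n=c_d\ga_{d+1}$. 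This limit is the main delicate point. It relies on the monotonicity of $t_n$ and on the terms $y^2/(t_n(t_n^2+y^2))$ decreasing in $n$, which makes the alternating-tail bound uniform in $y$.

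\emph{Step 3: rearrange.} We substitute the identity $\frac{z^2(-1)^n}{t_n(z^2-t_n^2)}=\frac{(-1)^nt_n}{z^2-t_n^2}+\frac{(-1)^n}{t_n}$. The constant terms $\sum(-1)^n/t_n$ then cancel against $\ga_{d+1}$, and the left-hand side absorbs $\ga_{d+1}z^0$ as the $j=d+1$ term. To match the stated sum $\sum_{j=0}^d\ga_jz^{j-d-1}$, we multiply by $2c_d$ and compare the indexing carefully, tracking whether the $\ga_{d+1}$ term survives. The second line follows from the partial fraction identity
$$
\frac{2(-1)^nt_n}{z^2-t_n^2}=\frac{1}{z-(-1)^nt_n}-\frac{1}{z+(-1)^nt_n}.
$$
The splitting of the conditionally convergent sum into two separate sums should be read as the sum of paired terms.
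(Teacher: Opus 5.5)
Your proposal follows the paper's proof step for step: the same test function $f$, the application of Proposition~\ref{prop:eveninterp} to obtain \eqref{id:sumphi}, the limit $z=iy$, $y\to\infty$ with the uniform alternating-tail bound giving $\sum_{n\ge1}(-1)^{n+1}/t_n=c_d\ga_{d+1}$, and the same partial-fraction rearrangement. Two bookkeeping points need fixing. First, $f(0)=+\ga_{d+1}$, since $f=\p\cdot(\ga_{d+1}+\ga_{d+3}z^2+\cdots)$ near $0$ and $\p(0)=1$. Second, in Step~3 the constant $\ga_{d+1}$ simply cancels against $\frac{1}{c_d}\sum_{n\ge1}(-1)^n/t_n=-\ga_{d+1}$; nothing is ``absorbed'' as a $j=d+1$ term. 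This leaves $\sum_{j=0}^{d-1}\ga_j z^{j-d-1}$, which equals $\sum_{j=0}^{d}\ga_j z^{j-d-1}$ because $\ga_d=0$ for odd $d$.
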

We have put the polynomial sum up to $j=d$ for convenience, but remember that $\ga_{d}=0$ since $d$ is odd.

 \subsection{Reciprocal formula: Even \texorpdfstring{$d$}{d}} 
We start similarly with the function
$$
f(z)=\frac{1-\p(z)\sum_{j=0}^{d} \ga_j z^j}{z^{d+2}}
$$
Proposition \ref{prop:oddsumm} applied to $z f(z)$ and Proposition \ref{prop:eveninterp} applied to $f(z)$ respectively imply that
$$
\sum_{n\geq 1} \frac{(-1)^{n+1}}{t_n^2} = c_d \ga_{d+2}
$$
and
\begin{align*}
\frac{1}{z^{d+2}\p(z)} - \sum_{j=0}^{d} \ga_j z^{j-d-2} & = \ga_{d+2} +\frac{z^2}{c_d}\sum_{n\geq 1} \frac{(-1)^{n}}{t_n^2(z^2-t_n^2)} = \frac{1}{c_d}\sum_{n\geq 1} \frac{(-1)^n}{z^2- t_n^2}.
\end{align*}
Multiplying both sides by $2zc_d$ and separating even numbered zeros from odd ones, we obtain.

\begin{proposition}\label{prop:reciprocalformulaeven}
For even $d$ we have
\begin{align*}
\frac{2c_d}{z^{d+1}\p(z)} & = 2 c_d \sum_{j=0}^{d} \ga_j z^{j-d-1} +2z \sum_{n\geq 1} \frac{(-1)^n}{z^2- t_n^2} \\ 
& = 2 c_d \sum_{j=0}^{d} \ga_j z^{j-d-1} +\sum_{k\geq 1}\bigg[ \frac{1}{z-t_{2k}}+\frac{1}{z+t_{2k}}\bigg] - \sum_{k\geq 1}\bigg[ \frac{1}{z-t_{2k-1}}+\frac{1}{z+t_{2k-1}}\bigg] .
\end{align*}
\end{proposition}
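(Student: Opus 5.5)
The plan is to mirror the odd-dimensional argument, adjusting only the power of $z$ we divide by, so that the auxiliary function is even and lies in $PW_{d-1}$.

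\emph{Choice of auxiliary function.} Since $d$ is even, $\ga_{d+1}=0$, so $1-\p(z)\sum_{j=0}^{d}\ga_j z^j$ vanishes to order at least $d+2$ at the origin. Hence
$$
f(z)=\frac{1-\p(z)\sum_{j=0}^{d}\ga_j z^j}{z^{d+2}}
$$
is entire and even, of exponential type at most $1$. On the real line it decays like $|x|^{-d-2}$, up to the bounded term involving $\p$ times a polynomial of degree $d$. More precisely, $\p(x)x^{j-d-2}$ with $j\le d$ is $O(|x|^{-2})$ times a bounded function, and $\p\in PW_{d-1}$ gives $\int|\p(x)||x|^{d-1}\,\d x<\infty$. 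Combining these, $\int|f(x)||x|^{d-1}\,\d x<\infty$ near infinity. The integrand is bounded near $0$, and $1/z^{d+2}$ is integrable against $|x|^{d-1}$ at infinity. So $f\in PW_{d-1}$; the same checks show $zf(z)\in PW_{d-1}$ and that $zf$ is odd. At the zeros we have $f(t_n)=t_n^{-d-2}$.

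\emph{The sum identity.} Apply Proposition \ref{prop:oddsumm} to $zf(z)$. The left side is $\sum_{n\geq1}(-1)^{n+1}t_n^{d-1}\cdot t_n\cdot t_n^{-d-2}=\sum(-1)^{n+1}t_n^{-2}$. The right side is $c_d (zf)'(0)=c_d f(0)=c_d\ga_{d+2}$, because the coefficient of $z^{d+2}$ in $1-\p\sum_{j\le d}\ga_jz^j$ equals the coefficient in $\p\sum_{j>d}\ga_jz^j$, which is $\ga_{d+2}$. This gives $\sum_{n\ge1}(-1)^{n+1}t_n^{-2}=c_d\ga_{d+2}$. Unlike the odd case, the series converges absolutely, so no limiting argument is needed.

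\emph{Interpolation.} Apply Proposition \ref{prop:eveninterp} to $f$:
$$
\frac{f(z)}{\p(z)}=\ga_{d+2}+\frac{z^2}{c_d}\sum_{n\ge1}\frac{(-1)^n t_n^{d}t_n^{-d-2}}{z^2-t_n^2}.
$$
Use $\frac{z^2}{t_n^2(z^2-t_n^2)}=\frac{1}{z^2-t_n^2}+\frac1{t_n^2}$ and the sum identity; the constant terms cancel, leaving $\frac1{c_d}\sum(-1)^n/(z^2-t_n^2)$. Also note that $f/\p=\frac{1}{z^{d+2}\p}-\sum_{j\le d}\ga_j z^{j-d-2}$. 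Multiplying by $2c_dz$ gives the first line of the proposition.

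\emph{Partial fractions.} For the second line, use $\frac{2z}{z^2-t^2}=\frac1{z-t}+\frac1{z+t}$ and split $n$ by parity: even $n$ carry sign $+$, odd $n$ carry sign $-$. Splitting the alternating sum into two separate sums needs justification. Each separate sum $\sum_k 2z/(z^2-t_{2k}^2)$ converges absolutely for $z$ off the zeros, since $t_n\gtrsim n$ (zeros of a type-$1$ function with real zeros have density at most $1/\pi$, and they are separated). So regrouping the original series, taken in order, into even and odd parts is legitimate.

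The main, if mild, obstacle is verifying membership $f, zf\in PW_{d-1}$. In particular, the integrability of $zf$ against $|x|^{d-1}$ at infinity needs $\p(x)x^{j-d-1}|x|^{d-1}$ integrable for $j\le d$; this reduces to $|\p(x)||x|^{d-2}$ being integrable, which holds since $\p\in PW_{d-1}$.
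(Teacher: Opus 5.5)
Your proposal is correct and follows the paper's proof essentially step for step: the same auxiliary function $f(z)=\bigl(1-\p(z)\sum_{j=0}^{d}\ga_j z^j\bigr)/z^{d+2}$, Proposition~\ref{prop:oddsumm} applied to $zf$ to get $\sum_{n\ge1}(-1)^{n+1}t_n^{-2}=c_d\ga_{d+2}$, Proposition~\ref{prop:eveninterp} applied to $f$ with the constant $\ga_{d+2}$ cancelling, multiplication by $2c_dz$, and a split by the parity of $n$. Your extra checks (that $f$ and $zf$ lie in $PW_{d-1}$, and that the parity split is legitimate) are sound; the split only needs $\sum_n t_n^{-2}<\infty$, which already follows from $n(r)\lesssim r$ for a function of exponential type, so you do not need separation of the zeros.
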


\section{The Differential Equation: Proof of Theorem 1}
Recall that we have defined $\frac{1}{\p(z)} = \sum_{j\geq 0} \ga_j z^j$ and $c_d:= \frac{d}{4\C_d}$. We start with the Taylor truncation
$$
    p(z)=\sum_{j=0}^{d} \ga_j z^{j}.
$$
Now define for odd $d$ the pair of functions
$$
\P_+(z) := \prod_{n\geq 1} (1-(-1)^nz/t_n) \quad \text{and} \quad \P_-(z) :=\prod_{n\geq 1} (1+(-1)^nz/t_n) \ \ (=\Phi_+(-z)).
$$
For even $d$ define rather
$$
\P_+(z) := \prod_{n\geq 1} (1-z^2/t_{2n}^2) \quad \text{and} \quad \P_-(z) :=\prod_{n\geq 1}  (1-z^2/t_{2n-1}^2).
$$
These definitions are purely inspired by the reciprocal formulas on the previous section. In either case, we have $\p(z) = \P_+ (z)\P_-(z)$ provided these are well defined, which we proceed to justify now. Call $Z_\ep$ the zero set of $\P_\ep$ for $\ep \in \{-,+ \}$. The key observations are that
$\sum_{\tau \in Z_\ep} \frac{1}{\tau^2} < \infty$ and $\sum_{\tau \in Z_\ep\cap [-r, r]} \frac{1}{\tau} \ \text{ converges as } r\to\infty$.
The first claim follows because each $Z_\ep$ is contained in the zero set of the exponential type function $\p$ which implies $\#(Z_\ep \cap [-r, r])  \lesssim r$, while the second follows by the convergence of alternating series when $d$ is odd and exact cancellation when $d$ is even. Thus the products defining $\P_+$ and $\P_-$ converge uniformly in compact sets, and these will be entire functions of exponential type by a theorem of Lindelöf \cite[p. 20]{K}.  

We now prove that the functions $\P_{+}$ and $\P_{-}$ satisfy a second-order differential equation.

\begin{lemma}\label{lem:ode-bigP}
The functions $\P_\ep$ for $\ep \in \{- ,+\}$ satisfy the differential equation
$$
z^{d+1}\P_\ep''(z) + ((d+1)z^d+2\ep c_d p(z))\P'_\ep(z) + \ep c_d p'(z) \P_\ep(z) = q(z) \P_\ep(z),
$$
where $q$ is an entire function of exponential type.
\end{lemma}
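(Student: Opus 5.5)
The plan is to turn the reciprocal formulas (Propositions \ref{prop:reciprocalformulaodd} and \ref{prop:reciprocalformulaeven}) into a Wronskian-type identity for the pair $\P_+,\P_-$, and then differentiate it. In both parities the sums over zeros are exactly logarithmic derivatives. For odd $d$, the zeros of $\P_+$ are the points $(-1)^n t_n$ and those of $\P_-$ are the points $-(-1)^n t_n$. For even $d$, the zeros of $\P_+$ are $\pm t_{2k}$ and those of $\P_-$ are $\pm t_{2k-1}$. Since the products converge locally uniformly (with the conditionally convergent sums taken symmetrically, as justified above), term-by-term logarithmic differentiation is legitimate. In both cases the reciprocal formula therefore reads
$$
\frac{2c_d}{z^{d+1}\p(z)} = \frac{2c_d\, p(z)}{z^{d+1}} + \frac{\P_+'(z)}{\P_+(z)} - \frac{\P_-'(z)}{\P_-(z)}.
$$
Multiplying by $z^{d+1}\P_+\P_-=z^{d+1}\p$ and writing $W:=\P_+'\P_- - \P_+\P_-'$, I get the entire identity
$$
z^{d+1}W(z) + 2c_d\, p(z)\P_+(z)\P_-(z) = 2c_d .
$$

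Next I differentiate this identity. I use $W' = \P_+''\P_- - \P_+\P_-''$, and I split $2c_d(p\P_+\P_-)'$ as $c_dp'\P_+\P_- + c_dp'\P_+\P_- + 2c_dp\P_+'\P_- + 2c_dp\P_+\P_-'$. Collecting the terms multiplied by $\P_-$ on one side and those multiplied by $\P_+$ on the other, define
$$
L_\ep f := z^{d+1}f'' + \big((d+1)z^d + 2\ep c_d p\big)f' + \ep c_d p' f .
$$
The derivative of the identity then becomes $\P_-\,L_+\P_+ = \P_+\,L_-\P_-$. Set $q := L_+\P_+/\P_+ = L_-\P_-/\P_-$. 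The first expression can only have poles at zeros of $\P_+$, and the second only at zeros of $\P_-$. These zero sets are disjoint because all zeros of $\p=\P_+\P_-$ are simple (Section \ref{sec:dim-reduction}). Hence $q$ is entire, and $L_\ep\P_\ep = q\P_\ep$, which is the asserted equation.

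The remaining, and I expect main, difficulty is showing that $q$ has exponential type. A quotient of exponential-type functions that happens to be entire is not automatically of exponential type. My first attempt will go through the Cartwright class. Since $\p\in PW_{d-1}$ is integrable against $|x|^{d-1}$ on $\r$, it is bounded on $\r$, so $\int_\r \log^+|\p(x)|/(1+x^2)\,\d x<\infty$. I then want the same bound for $\P_+$ and $\P_-$ separately. One way is to use that their zeros are real and interlace-split, together with Lindelöf/Krein-type control from the symmetric convergence of $\sum 1/\tau$. Another way is to use the Wronskian identity, which bounds $|\P_+\P_-|$ and $|z^{d+1}W|$ on $\r$. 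Granting this, $L_+\P_+$ is also in the Cartwright class, being a polynomial combination of $\P_+$ and its derivatives. By Krein's theorem, an entire quotient of two Cartwright-class functions is again Cartwright class, hence of exponential type. If this route stalls, the fallback is to write $q = (\P_- L_+\P_+)/\p$ and use that $\p$ is a sine-type-like function whose zeros $t_n$ are separated and regularly distributed, so that $|\p|$ is bounded below away from the zeros. That would give exponential-type bounds for $q$ directly via the maximum principle on suitable circles avoiding the $t_n$.
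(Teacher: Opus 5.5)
Your derivation of the equation with an entire $q$ is correct, and it is more direct than the paper's. The paper handles each sign separately. It uses \eqref{id:derivphi} to show that $\partial_z[z^{d+1}\Phi_\varepsilon'+2\varepsilon c_d p\Phi_\varepsilon]$ vanishes at the zeros of $\Phi_\varepsilon$, which gives an equation with coefficient $2\varepsilon c_d p'$ and its own entire $q_\varepsilon$. A separate computation then shows $q_+-q_-=2c_dp'$, so that $q:=q_+-c_dp'=q_-+c_dp'$ works for both signs. Your identity $z^{d+1}W+2c_dp\Phi_+\Phi_-=2c_d$ already contains \eqref{id:derivphi}: evaluate it at a zero of $\Phi_\varepsilon$. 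Differentiating it gives $\Phi_-L_+\Phi_+=\Phi_+L_-\Phi_-$ in one step. The grouping of terms checks out, and since the zero sets are disjoint the common quotient is entire.

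The gap is the exponential type of $q$, which you do not prove. Moreover, the obstacle you cite is not real: an entire function that is a quotient of two entire functions of exponential type is \emph{automatically} of exponential type. This is a standard corollary of Lindel\"of's theorem, and it is exactly what the paper uses (its reference to Koosis, p.~22).

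Here is how it applies. $\Phi_+$ has exponential type (Lindel\"of, as noted before the lemma), hence so does $L_+\Phi_+$, by Cauchy estimates and the polynomial coefficients. Comparing Hadamard factorizations shows that $q=L_+\Phi_+/\Phi_+$ has order at most one. Its zeros are those of $L_+\Phi_+$ with the zeros of $\Phi_+$ removed. So their counting function is $O(r)$, and their partial sums $\sum_{|\tau|\le r}\tau^{-1}$ are differences of two bounded functions of $r$. Lindel\"of's criterion then gives exponential type. Alternatively, Cartan's minimum-modulus lemma gives, for each large $R$, a radius $r\in[R/2,R]$ with $\log|\Phi_+|\ge -CR$ on $|z|=r$, and the maximum principle finishes.

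Neither of your proposed routes closes this as written.
\begin{itemize}
\item In the Cartwright route, the key input is only ``granted.'' Exponential type plus real zeros does not imply the Cartwright class: symmetric real zeros with $n(r)=O(r)$ but no density give counterexamples. Your Wronskian identity controls $\Phi_+\Phi_-=\varphi$ and $W$ on $\mathbb{R}$, not $\Phi_+$ and $\Phi_-$ individually.
\item The fallback assumes $\varphi$ is ``sine-type-like'' with separated, regularly distributed zeros. But $\varphi$ is not of sine type, since it tends to $0$ on $\mathbb{R}$, and no such separation or lower bound is available at this stage.
\end{itemize}
The circles-plus-maximum-principle idea behind your fallback is sound. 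It should be run with Cartan's lemma, which needs nothing beyond the exponential type of the denominator.
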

\begin{proof}
Observe that, since $\p(z)=\Phi_+(z)\Phi_-(z)$, we have 
\begin{align}\label{id:philogder}
\frac{\p'(z)}{\p(z)}=\frac{\P_+'(z)}{\P_+(z)}+\frac{\P_-'(z)}{\P_-(z)}.
\end{align}
Moreover, Propositions \ref{prop:reciprocalformulaodd} and \ref{prop:reciprocalformulaeven} also imply that
\begin{align}\label{id:recphi}
   \frac{2c_d}{z^{d+1}\p(z)} = \frac{\P_+'(z)}{\P_+(z)}-\frac{\P_-'(z)}{\P_-(z)} +2c_d z^{-d-1}p(z),
\end{align}
which upon rearrangement of terms is
$$
z^{d+1}\P_\ep'(z) = \frac{z^{d+1}\P_\ep(z)\P'_{-\ep}(z) +2\ep c_d}{\P_{-\ep}(z)} -2\ep c_d p(z)\P_\ep(z),
$$
for $\ep=\pm $. Now, if $T$ is a zero of $\P_\ep$, the definition of $\P_\ep$ and identity \eqref{id:derivphi} imply that $T^{d+1} \p'(T) = 2 \ep c_d$, meaning that the function $z^{d+1} \P_\ep'(z)\P_{-\ep}(z)-2\ep c_d$ vanishes at all zeros of $\P_\ep$, since at those points $\p'(T)  = \P_\ep'(T )\P_{-\ep} (T)$. This then implies that
$$
\partial_z [z^{d+1}\P_\ep'(z) + 2 \ep c_d p(z) \P_\ep(z)]_{z=T} = \frac{(T^{d+1}\P_\ep'(T)\P_{-\ep}(T) -2\ep c_d)\P_{-\ep}'(T)}{\P_{-\ep}(T)^2} = 0
$$
if $\P_\ep(T)=0$. We conclude that
$$
z^{d+1}\P_\ep''(z) + ((d+1)z^d+2\ep c_d p(z))\P'_\ep(z) + 2\ep c_d p'(z) \P_\ep(z) = q_\ep(z) \P_\ep(z)
$$
for some entire function $q_\ep$. Taking the difference of the equation with $\ep=+$ and $\ep=-$ (after dividing each by the corresponding $\P_\ep$), and using identities \eqref{id:philogder} and \eqref{id:recphi}, we obtain 
\begin{align*}
q_+(z)-q_-(z)&=
z^{d+1}\partial_z\left[
\frac{2c_d}{z^{d+1}}
\left(\frac{1}{\varphi(z)}-p(z)\right)\right]
+\frac{2c_d(d+1)}{z}\left(\frac{1}{\varphi(z)}-p(z)\right) \\
&\qquad+2c_d\left(\frac{1}{\varphi(z)}-p(z)\right)\frac{\varphi'(z)}{\varphi(z)}
+2c_d p(z)\frac{\varphi'(z)}{\varphi(z)}+4c_dp'(z)\\
&= 2c_d p'(z),
\end{align*}
so one can deduce that the equation in the statement holds with $q(z) := q_+(z) - c_d p'(z)=q_-(z) + c_d p'(z)$. Since $q$ is a ratio of exponential type functions which is entire, it is also of exponential type \cite[p. 22]{K}.
\end{proof}

Actually, we can derive something much stronger about $q$ from the differential equation: it is a polynomial given by the formula
\begin{equation*}
    q(z) = -\frac{1}{4} z^{d+1} + \frac{c_d^2}{z^{d+1}}\Bigg(\sum_{j = 0}^{d } \alpha_{j} z^{2j} -p(z)^2\Bigg),
\end{equation*}
where $\alpha_j$ is the coefficient of $z^{2j}$ in $\frac{1}{\p_d(z)^2}$. This will be a consequence of Lemmas \ref{lem:q_poly} and \ref{lem:r_poly}, which we prove next.

\begin{lemma}\label{lem:q_poly} The function $q(z)$ is a polynomial of degree at most $d+1.$
\end{lemma}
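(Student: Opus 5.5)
We plan to show that $q$ has polynomial growth of order at most $|z|^{d+1}$ along the imaginary axis. Since $q$ is already known to be of exponential type, a Phragmén--Lindelöf argument in each quadrant will then force $q$ to be a polynomial of degree at most $d+1$. To express $q$ directly, divide the equation of Lemma \ref{lem:ode-bigP} by $\P_\ep$:
$$
q(z) = z^{d+1}\left[\Big(\tfrac{\P_\ep'}{\P_\ep}\Big)' + \Big(\tfrac{\P_\ep'}{\P_\ep}\Big)^2\right] + \big((d+1)z^d + 2\ep c_d p(z)\big)\frac{\P_\ep'}{\P_\ep} + \ep c_d p'(z).
$$
Since $p$ and $p'$ are polynomials of degree at most $d$, it suffices to show that $L_\ep := \P_\ep'/\P_\ep$ is $O(1/|y|)$ along $z=iy$, $|y|\to\infty$, and that $L_\ep'$ is $O(1/|y|^2)$ there. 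Alternatively, $O(1)$ for $L_\ep$ and $O(|y|)$ growth for $z^{d+1}L_\ep^2$ in the appropriate combination would also do.

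The estimate on the imaginary axis comes from partial fractions. In the even case,
$$
L_+(z)=\sum_k \frac{2z}{z^2-t_{2k}^2}.
$$
At $z=iy$ this is purely imaginary, with modulus at most $\sum_k 2|y|/(y^2+t_{2k}^2)\lesssim 1$, because $\#\{t_n\le r\}\lesssim r$. This only gives $L_\ep=O(1)$, which would make $z^{d+1}L_\ep^2$ of size $|y|^{d+1}$. That is acceptable, since it is exactly the degree bound we want. Similarly, $L_\ep'=-\sum 1/(z-\tau)^2$ is $O(1/|y|)$, so $z^{d+1}L_\ep'$ is $O(|y|^{d})$. The remaining terms $z^d L_\ep$ and $p L_\ep$ are $O(|y|^d)$. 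In the odd case the sums $\sum_n 1/(z\mp(-1)^nt_n)$ converge only conditionally. We will pair consecutive terms, or use the symmetric-partial-sum convergence noted before Lemma \ref{lem:ode-bigP}, to get the same $O(1)$ bound. Concretely, for the part of $L_\ep$ with $|\tau|\le|y|$ the differences are $O(1)$ in total, and the tail is controlled by alternation. So $|q(iy)|\lesssim |y|^{d+1}$ as $|y|\to\infty$.

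To conclude we need a bound on a second ray per quadrant, and this is the main obstacle: on the real axis $L_\ep$ has poles, so $q$ cannot be bounded there by this formula. We would try first to avoid the real axis by applying the same estimate on rays $\arg z=\pm\theta$ for small fixed $\theta>0$. There $|z-\tau|\gtrsim|z|\sin\theta$ and $|z-\tau|\gtrsim |\tau|\sin\theta$, so the partial-fraction bounds go through verbatim with constants depending on $\theta$. Then $q(z)/(z+i)^{d+1}$ is bounded on the four rays $\arg z\in\{\pm\theta,\pm\pi/2\}$ and on their reflections $\pi\pm\theta$. The sectors between these rays have opening angle less than $\pi$, and $q$ has exponential type, so Phragmén--Lindelöf applies in each of them. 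The sector $|\arg z|<\theta$ has opening $2\theta<\pi$, so it is covered as well. Hence $|q(z)|\lesssim(1+|z|)^{d+1}$ in the whole plane, and Liouville's theorem (Cauchy estimates) gives that $q$ is a polynomial of degree at most $d+1$.
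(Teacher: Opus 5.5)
Your proof is correct. Its skeleton is the same as the paper's: bound $q(z)$ by $|z|^{d+1}$ on rays, apply Phragmén--Lindelöf in sectors of opening $<\pi$ using that $q$ has exponential type, then conclude with Liouville/Cauchy. The difference is in how you control $\P_\ep'/\P_\ep$ on the rays.

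The paper never works with the partial fractions of $\P_\ep$ directly. It uses \eqref{id:logDerPhiPlus}, $\P_+'/\P_+=\tfrac12\p'/\p+c_d z^{-d-1}/\p-c_d\,p(z)z^{-d-1}$. This reduces everything to $\p'/\p$, which is an absolutely convergent symmetric sum, plus terms that are bounded because $|\p(z)|\ge 1$ whenever $\mathrm{Re}\,z^2\le 0$. That lower bound is only available for $|\arg z|\in[\pi/4,3\pi/4]$. So the paper works on steep rays $z=t+itM$ with $M>2$, applies Phragmén--Lindelöf in $\{|\arg z|\le\arctan M\}$ with a constant uniform in $M$, and lets $M\to\infty$.

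You instead estimate $\sum_{\tau\in Z_\ep}(z-\tau)^{-1}$ directly. This costs you the conditional-convergence issue for odd $d$. On a tilted ray the clean way to make your ``verbatim'' claim precise is to split at $|\tau|\approx|z|$ and write $\frac{1}{z-\tau}=-\frac1\tau+\frac{z}{\tau(z-\tau)}$ on the tail. The second term is absolutely summable and $O(1)$, and the first is a monotone alternating tail of $\sum 1/\tau_n$. In exchange, your bounds hold on every non-real ray, so six fixed sectors suffice, with no limiting procedure and no use of the reciprocal formula at this stage.

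Two small points to fix:
\begin{itemize}
\item $q(z)/(z+i)^{d+1}$ has a pole at $-i$, which lies on your ray $\arg z=-\pi/2$. In each sector, divide instead by $(z-z_0)^{d+1}$ with $z_0$ outside the closed sector, as the paper does with $(z\pm1)^{d+1}$.
\item Your first-stated target $L_\ep=O(1/|y|)$ cannot hold, since in fact $|L_\ep(iy)|\to 1/2$. As you then observe, $L_\ep=O(1)$ and $L_\ep'=O(1/|y|)$ already suffice.
\end{itemize}
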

\begin{proof}
The strategy consists of proving a bound for $q$ of the form
\begin{equation*}
    |q(z)| \lesssim|z|^{d+1} + 1
\end{equation*}
via an application of the Phragmén--Lindelöf principle. Once that is in place, the result plainly follows from a standard application of Liouville's theorem and Cauchy's integral formula.

Adding \eqref{id:philogder} and \eqref{id:recphi} we have
\begin{equation}\label{id:logDerPhiPlus}
    \frac{\Phi_+'(z)}{\P_+(z)}  = \frac{1}{2} \frac{\varphi'(z)}{\varphi(z)} + \frac{ c_d}{z^{d+1} \varphi(z)} - \frac{c_d p(z)}{z^{d+1}}.
\end{equation}
Let $t>0$ and $M \in \r$ and plug $z =t + i tM $ in the above formula. We obtain that
\begin{equation*}
    \frac{\Phi_+'(t + itM )}{\P_+(t + itM)}  = \frac{1}{2} \frac{\varphi'(t + itM )}{\varphi(t + itM )} + O(1),
\end{equation*}
where the $O(1)$ term is uniform in $|t| \geq (M^2 + 1)^{-1/2}$, since $\deg p \leq d$ and $|\varphi(t + it M)| \geq 1$ for all $t > 0$ and $M > 1$ (this can be seen from the product formula for $\p$).
We also have
\begin{align*}
    \bigg|\frac{\varphi'(t + itM)}{\varphi(t + itM)} \bigg|&=  2 \bigg|\sum_{n\geq 1} \frac{t + itM}{(t + itM)^2 - t_n^2}\bigg| \\
    &= 2 \bigg|\sum_{n\geq 1} \frac{t + itM}{(t^2(1- M^2) +2it^2M )- t_n^2}\bigg|\\
    &\leq 2(t^2 + t^2 M^2 )^{1/2} \sum_{n\geq 1} \frac{1 }{((t^2( M^2 -1) + t_n^2 )^2+ 4t^4 M^2)^{1/2}}\\
    &\lesssim |Mt| \sum_{n\geq 1} \frac{1 }{((t^2 M^2 + t_n^2 )^2+ 4t^4 M^2)^{1/2}}\\
    &\lesssim   \sum_{n\geq 1} \frac{|Mt| }{t^2 M^2 + t_n^2 } 
\end{align*}
where the implicit constants are universal for $t > 0$ and $|M| > 2$. Now, since $\p$ has exponential type, we have the estimate for the zero counting function of $\p$ given by $n(r) =  \# \{ k  \geq 1 : t_k \leq r \} \lesssim r$ so we can further bound integrating by parts
\begin{equation*}
    \bigg|\frac{\varphi'(t + itM)}{\varphi(t + itM)} \bigg| \lesssim \sum_{n\geq 1} \frac{|Mt| }{t^2 M^2 + t_n^2 }  =  \int_{0}^\infty \frac{|Mt| }{t^2 M^2 + x^2 } \d n(x)\lesssim \int_0^\infty \frac{|Mt|}{|Mt|^2 + x^2} \, \d x = \frac{\pi}{2}. 
\end{equation*}
So one can say that for $M > 2$, the estimate $|\P_+'/\P_+ (t +itM)| \lesssim 1$ holds for all $|t| \geq (M^2 + 1)^{-1/2}.$ The same calculation applies to $\ep = -$.

To obtain the desired bound for $\P''_\ep/ \P_\ep$, one needs only to differentiate :
\begin{align*}
    \frac{\P_\ep''}{\P_\ep}(z)
    = \bigg(\frac{\P_\ep'}{\P_\ep}\bigg)'(z)+ \bigg(\frac{\P_\ep'}{\P_\ep}\bigg)^2(z) = \bigg(\frac{\P_\ep'}{\P_\ep}\bigg)'(z) + O(1).
\end{align*}
Doing similar estimates as before, we thus have
\begin{equation*}
    \frac{\P_\ep''}{\P_\ep}(z)= \frac{1}{2}  \left(\frac{\p'}{\p}\right)' (z) +O(1)
\end{equation*}
whenever $z$ is of the form $z = t + itM$ for $|M| > 2$ and $|t| \geq (M^2 + 1)^{-1/2}$.  To bound the remaining term, observe that
\begin{align*}
    \Bigg|\left(\frac{\p'}{\p}\right)' (t + itM)\Bigg| &= 2 \Bigg|\sum_{n \geq 1} \frac{(t+ itM)^2 + t_n^2}{((t+ itM)^2 - t_n^2)^2} \Bigg|
    \lesssim \sum_{n\geq 1} \frac{(M^2 + 1)t^2 + t_n^2  }{(t^2 M^2 + t_n^2 )^2+ 4t^4 M^2}  \lesssim 1,
\end{align*}
for all $ t > 0 $ and $M > 2$. 

Now let $M > M_0$, so that by the bounds we have obtained 
\begin{align*}
     |q(z)| = \left|\frac{z^{d+1}\P_\ep''(z) + ((d+1)z^d+2\ep c_d p(z))\P'_\ep(z) + \ep c_d p'(z) \P_\ep(z) }{\P_\ep (z)} \right|\lesssim  |z|^{d+1} + 1
\end{align*}
whenever $z = t + itM$ and $|t| \geq (M^2 + 1)^{-1/2}$. Since $q$ is entire, we can trivially estimate $|q(z)| \lesssim 1$ for $|z | \leq 1$. Combining these two facts, it follows that there is a universal constant $C$ such that for every $|M| > M_0$ and every half-line $\ell_M := \{t + itM : t \geq 0\}$ , the bound 
$$|q(z)/(z+1)^{d+1}| \leq C$$
holds for $ z \in \ell_M.$ Given an $M > M_0$, we now apply the Phragmén--Lindelöf principle to $q(z)/(z+1)^{d+1}$ in the sector $\S_M = \{ z : |\arg z| \leq \arctan(M) \}$. We can do this since the central angle $2 \cdot\arctan M$ is strictly less than $\pi$ and $q(z)$ has order at most $1$. Thus, we can use the same constant $C$ to bound $|q(z)/(z+1)^{d+1}| \leq C$ for all $z \in \S_M$. Since $M > M_0$ can be taken arbitrarily large, it follows $|q(z)/(z+1)^{d+1}| \leq C$ in the right half-plane $\{x+iy: x > 0 \}$. The same argument can be applied to $|q(z)/(z-1)^{d+1}|$ to bound it by (a possibly larger) $C$ in the left half-plane $\{x+iy: x < 0 \}$. Hence,
\begin{equation*}
    |q(z)| \lesssim |z|^{d+1} + 1
\end{equation*}
holds in all of $\cp$ by continuity.
\end{proof}
\vspace{-0.25cm}
Next, we will consider an auxiliary differential equation for which the ODE in Theorem \ref{thm:ode} will be its symmetric square. Define 
\begin{equation*}
    h_d(z) = \sum_{k = 0}^{d -1} \frac{\ga_k}{k - d} z^{k-d} + \ga_d \log z
\end{equation*}
in a simply connected domain avoiding the origin. Define
\begin{equation*}
    \Psi_\ep (z) = e^{\ep c_d h_d(z)} \P_\ep(z),
\end{equation*}
so that from Lemma \ref{lem:ode-bigP}, it satisfies the differential equation
\begin{equation*}
    \Psi_\ep ''(z) + \frac{d+1}{z} \Psi_\ep'(z) - \frac{z^{d+1}q(z) +  c_d^2 p(z)^2}{z^{2d+2}}  \Psi_\ep(z) = 0.
\end{equation*}
Therefore, since $\Psi_+$ and $\Psi_-$ solve the same second-order ODE, the product $\p(z) = \Psi_+(z) \Psi_-(z)$ is a solution of the symmetric square of the above differential equation, yielding
\begin{equation*}
    \p'''(z) + \frac{3(d+1)}{z} \p''(z) + \bigg(\frac{(d+1)(2d+1)}{z^2} -\frac{r(z)}{z^{2d+2}} \bigg)\p'(z) - \frac{r'(z)}{2z^{2d+2}}\p(z) = 0,
\end{equation*}
where $r(z)$ is the polynomial given by
\begin{equation}\label{def:r}
    r(z) := 4z^{d+1}q(z)+4c_d^2p(z)^2.
\end{equation}
Upon multiplying by $z^{2d+2}$, this becomes
\begin{align*}
    z^{2d+2}\p'''(z) +3(d+1)z^{2d+1}\p''(z) +\big((d+1)(2d+1)z^{2d} -r(z) \big)\p'(z) - \frac12 r'(z)\p(z) = 0.
\end{align*}
Using
\begin{equation*}
    \partial_z\big[z^{d+1}\partial_z(z^{d+1}\p')\big]
    =z^{2d+2}\p'''+3(d+1)z^{2d+1}\p''+(d+1)(2d+1)z^{2d}\p',
\end{equation*}
we can rearrange it to obtain 
\begin{align*}
    \partial_z\big[z^{d+1}\partial_z(z^{d+1}\p'(z))\big]
    &=4(z^{d+1}q(z)+c_d^2p(z)^2)\p'(z)+\partial_z\big[2z^{d+1}q(z)+2c_d^2p(z)^2\big]\p(z) \\
    &= r(z) \p'(z) + \frac12 r'(z) \p(z).
\end{align*}

Now, setting $Q(z) := z^{d+1} \partial_z[ z^{d+1}  \p'(z)]$, one can see that the above leads to
\begin{align*}
     \partial_z [r(z) \p(z)^2] &= 2 \p(z) Q'(z) =2\partial_z[\p(z)Q(z)] - 2 \p'(z)Q(z) = \partial_z[2\p(z)Q(z)-z^{2d+2} \p'(z)^2]
\end{align*}
so integrating
\begin{equation*}
     2\p(z) z^{d+1} \partial_z( z^{d+1}\p'(z) ) - (z^{d+1}\p'(z))^2 = r(z) \p(z)^2 + C,
\end{equation*}
where $C = -r(0) = -4 c_d^2$. 

Thus, we only need to prove the following lemma to conclude the proof of Theorem \ref{thm:ode}.
\begin{lemma}\label{lem:r_poly}
    The polynomial $r(z)$ defined in \eqref{def:r} is of degree $2d + 2$ and it is given by
\begin{equation*}
    r(z) =  - z^{2d+2} + 4 c_d^2 \sum_{j = 0}^{d } \alpha_{j} z^{2j},
\end{equation*}
where $\alpha_j$ is the coefficient of $z^{2j}$ in $\frac{1}{\p(z)^2}$ for  $j = 0, \ldots, d$.
\end{lemma}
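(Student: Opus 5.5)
We determine $r$ by splitting it into its low-order coefficients ($z^0,\dots,z^{2d+1}$) and its high-order coefficients, and computing each part separately. Recall $r(z)=4z^{d+1}q(z)+4c_d^2p(z)^2$, and $q$ is a polynomial of degree at most $d+1$ by Lemma \ref{lem:q_poly}. So $r$ is a polynomial of degree at most $2d+2$. Since $\deg p\le d$, the coefficient of $z^{2d+2}$ comes only from the leading coefficient of $q$, and the coefficients of $z^{d+1},\dots,z^{2d+2}$ are determined by $q$ and $p^2$ jointly.

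\emph{Low-order part.} Consider the integrated identity derived just above,
\[
2\p\, z^{d+1}\partial_z(z^{d+1}\p') - (z^{d+1}\p')^2 = r\p^2 - 4c_d^2 .
\]
Its left-hand side is divisible by $z^{2d+2}$, since both $z^{d+1}\partial_z(z^{d+1}\p')$ and $(z^{d+1}\p')^2$ vanish to order at least $2d+2$ at the origin. Hence $r(z)\p(z)^2 = 4c_d^2 + O(z^{2d+2})$, that is,
\[
r(z)=\frac{4c_d^2}{\p(z)^2}+O(z^{2d+2}).
\]
This pins down every coefficient of $r$ up to degree $2d+1$ as $4c_d^2$ times the coefficient of $1/\p^2$. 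Because $\p$ is even, the odd ones vanish and the even ones are $4c_d^2\alpha_j$ for $j\le d$.

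\emph{Leading coefficient.} It remains to show that the coefficient of $z^{2d+2}$ is $-1$, i.e.\ that the leading coefficient of $q$ in degree $d+1$ equals $-1/4$. I would get this from asymptotics along the imaginary axis in Lemma \ref{lem:ode-bigP}, or more cleanly in the symmetric-square ODE. Put $z=iy$ with $y\to\infty$ and divide the third-order ODE by $z^{2d+2}\p$:
\[
\frac{\p'''}{\p}+\frac{3(d+1)}{z}\frac{\p''}{\p}+\Big(\frac{(d+1)(2d+1)}{z^2}-\frac{r}{z^{2d+2}}\Big)\frac{\p'}{\p}-\frac{r'}{2z^{2d+2}}\frac{\p}{\p}=0 .
\]
Since $\p$ is even, real, of exponential type $1$, with real zeros, and belongs to $PW_{d-1}$, one expects $\p'(iy)/\p(iy)\to i$ and $\p''/\p\to -1$, $\p'''/\p\to -i$. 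These limits follow from the partial-fraction expansion $\p'/\p(z)=\sum 2z/(z^2-t_n^2)$ together with the zero density: for the extremizer $t_n\sim \pi n$, which holds because its type is exactly $1$ and, being a Cartwright-class function, its zeros have density $1/\pi$. Writing $r(z)=\beta z^{2d+2}+O(z^{2d+1})$, the displayed equation gives in the limit
\[
-i-\beta\, i-\tfrac{(2d+2)\beta}{2\,iy}\cdot(\text{lower order})=0 ,
\]
so $\beta=-1$. Equivalently, $\p$ is then a solution whose growth $e^{|y|}$ along $i\r$ forces the characteristic equation $\lambda^3-\beta'\lambda=0$ to have the root $\lambda=\pm i$, i.e.\ $\beta'=-1$. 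In particular $\deg r=2d+2$.

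\emph{Main obstacle.} The hard step is rigorously justifying $\p'(iy)/\p(iy)\to i$ and the analogous limits for the higher derivatives. I would first try the Cartwright-class route: by the Levinson--Cartwright theorem the zeros satisfy $n(r)/r\to 1/\pi$. Then
\[
\frac{\p'(iy)}{\p(iy)}=\sum_n\frac{2iy}{-y^2-t_n^2}\cdot(-1)
\]
is a Riemann--Stieltjes sum converging to $i\int_0^\infty \frac{2y}{y^2+x^2}\,\frac{dx}{\pi}=i$. The derivative ratios then follow as in the proof of Lemma \ref{lem:q_poly}, via the identity $\p''/\p=(\p'/\p)'+(\p'/\p)^2$ and its analogue for $\p'''/\p$, with $(\p'/\p)'=O(1/y)$.
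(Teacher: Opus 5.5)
Your proposal is correct, but it is organized differently from the paper's proof.

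\textbf{Low-order coefficients.} The paper works with the differentiated equation $\partial_z[z^{d+1}\partial_z(z^{d+1}\p')] = (r-4c_d^2\p^{-2})\p' + (\tfrac12 r' + 4c_d^2\p'\p^{-3})\p$. It first shows separately that $r$ is even, using the parity of $q$ read off from Lemma \ref{lem:ode-bigP}. It then runs an induction on the coefficient relations \eqref{id:coeffEq} to get $\beta_0,\dots,\beta_d$.

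You instead use the integrated identity, whose constant $-4c_d^2$ is already fixed, and read off $r = 4c_d^2\p^{-2} + O(z^{2d+2})$ in one step. This needs no induction and no separate parity argument, since the odd coefficients through $z^{2d+1}$ come out as zero automatically. The order-$(2d+2)$ vanishing of $z^{d+1}\partial_z(z^{d+1}\p')$ uses $\p'(0)=0$, i.e. evenness of $\p$. Without it you would only get $O(z^{2d+1})$, which is still enough for the coefficients $4c_d^2\alpha_j$, $j\le d$.

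\textbf{Top coefficient.} The paper passes through $\beta_{d+1} = 4\lim_{y\to\infty} q(iy)/(iy)^{d+1}$ and the second-order equation for $\P_\ep$. This requires comparing $\P_\ep'/\P_\ep$ with $\tfrac12\p'/\p$ via \eqref{id:logDerPhiPlus}. You stay with $\p$ and take the limit in the third-order equation along $i\r$, obtaining $L^3 = \beta_{d+1}L$ with $L = \lim \p'(iy)/\p(iy)$. The small extra cost is checking $(\p'/\p)''(iy) = O(y^{-2})$ for the $\p'''/\p$ term, which goes through with the same counting bound $n(r)\lesssim r$.

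Dividing your integrated identity by $z^{2d+2}\p^2$ would be cheaper still. It gives $\beta_{d+1} = L^2$ directly, using only $\p''/\p$ and $|\p(iy)|\ge 1$. Both routes rest on the same input, the Cartwright density $n(r) = r/\pi + o(r)$, which yields $L^2=-1$.

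\textbf{One slip to fix.} The limit is $L = -i$, not $i$. At $z=iy$ one has $2z/(z^2-t_n^2) = -2iy/(y^2+t_n^2)$, so the factor $(-1)$ in your displayed sum should not be there. As a check, for $\p=\cos$ one gets $\p'(iy)/\p(iy) = -i\tanh y$. Since only $L^2$ enters, the conclusion $\beta_{d+1}=-1$ is unaffected. However, $\p'''(iy)/\p(iy)$ tends to $i$ rather than $-i$.
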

\begin{proof}
Our first observation is that $r(z) = 4z^{d+1}q(z)+4c_d^2p(z)^2$ is even. From the differential equation of Lemma \ref{lem:ode-bigP}, we can conclude $q$ is even when $d$ is odd and odd when $d$ is even. Thus, the product $z^{d+1} q(z)$ is in either case even. Using the fact that $p(z)$ is also even, we can conclude the same holds for $r(z)$. 

Adding and subtracting $4 c_d^2\p'(z)\p(z)^{-2}$, our differential equation becomes
\begin{align*}
    \partial_z\big[z^{d+1}\partial_z(z^{d+1}\p'(z))\big]
    &= (r(z) -4c_d^2\p(z)^{-2})\p'(z) + \bigg(\frac{1}{2}r'(z) +  4 c_d^2 \p'(z) \p(z)^{-3}\bigg) \p(z).
\end{align*}
We now compare the power series expansion of both sides of the equation. From the expression in the left-hand side, we can see that all coefficients of degree at most $2d$ must vanish. Now, considering $r(z) $ is even, we let $\beta_j$ stand for the coefficient of $z^{2j}$ in $r(z)$. We have that by putting the $(2k+1)$-th coefficient of the right-hand side equal to 0, we obtain
\begin{equation}\label{id:coeffEq}
    \sum_{j = 0}^{k} (\beta_j - 4c_d^2\alpha_j)  \frac{\p^{(2k -2j +2)}(0)}{(2k -2j +1)!} +  \sum_{j = 0}^{k} (j+1)(\beta_{j +1} -4c_d^2\alpha_{j+1})\frac{\p^{(2k-2j)}(0)}{(2k -2j)!} = 0 
\end{equation}
for $k =0, \ldots, d -1$. Taking $k = 0$, this yields $(\beta_0 -4 c_d^2\alpha_0)\p''(0) +  (\beta_1 - 4 c_d^2\alpha_1)\p(0) = 0$. But we already have $\beta_0 = 4c_d^2\alpha_0$ since $\alpha_0 = 1= p(0)^2 $, so $\beta_1 = 4 c_d^ 2\alpha_1$ as well. Proceeding inductively, if we suppose that $\beta_j = 4 c_d^2 \alpha_j$ for $j = 0, \ldots, k$, we have that the equation \eqref{id:coeffEq} reduces to $(k+1)(\beta_{k+1} - 4 c_d^2\alpha_{k+1}) \p(0) = 0$, which gives equality for the case $k+1$, whence we conclude that $\beta_j = 4 c_d^2 \alpha_j$ for all $j = 0, \ldots, d$. 

To confirm $\beta_{d+1} = -1$, we proceed differently. Note that
\begin{equation*}
    \beta_{d+1}=4  \lim_{y\to \infty} \frac{q(iy)}{(iy)^{d+1}} =\lim_{y \to \infty} \left(\frac{\p'(iy)}{\p(iy)}\right)^2,
\end{equation*}
where the middle equality follows from 
\begin{equation*}
    \frac{\P'_\ep (i y)}{\P_\ep(i y)}  = \frac{1}{2}\frac{\p'(iy)}{\p(iy)} + o(1) \quad \text{ and } \quad \bigg(\frac{\P'_\ep}{\P_\ep}\bigg)'(iy) = o(1)
\end{equation*}
as $y\to\infty$, as one can see through calculations similar to those of Lemma \ref{lem:q_poly}. So we proceed to calculate the limit of $\frac{\p'(iy)}{\p(iy)}$ as $y\to \infty$. Letting 
\begin{equation*}
    n(r) := \# \{ k \geq 1  : t_k \leq r \}
\end{equation*}
denote the zero counting function for $\p$, we can write
\begin{equation*}
    \frac{\p'(iy)}{\p(iy)} = -2i \sum_{n\geq 1}\frac{y}{y^2 + t_n^2} = - 2i \int_0^\infty \frac{y}{y^2 + t^2} \, \d n (t)
\end{equation*}
and integrate by parts, yielding
\begin{align*}
    \int_0^\infty \frac{2y}{y^2 + t^2} \, \d n (t) =  \int_0^\infty \frac{4 yt}{(y^2 + t^2)^2} n(t) \, \d t .
\end{align*}
Now, since $\p$ belongs to the Cartwright class ($\p$ is bounded on $\r$ and has exponential type 1), we have precise asymptotics for $n(r)$, namely
\begin{equation*}
    n(r) = \frac{1}{\pi}r + o (r)
\end{equation*}
as $r \to \infty$ (see \cite[p. 127]{Lev}). We can therefore compute 
\begin{align*}
    \int_0^\infty \frac{4 yt}{(y^2 + t^2)^2} n(t) \, \d t = \frac{4}{\pi} \int_0^\infty \frac{yt^2}{(y^2 + t^2)^2} \, \d t + o(1) = 1 + o(1),
\end{align*}
which shows
\begin{equation*}
    \lim_{y \to \infty} \frac{\p'(iy)}{\p(iy)} = -i,
\end{equation*}
and so $\beta_{d+1} = (-i)^2 = -1$, concluding the proof.
\end{proof}

\section{Appendix: a distributional identity}
For convenience we let
$$A_d(z)= z^{1-d/2} J_{d/2-1}(z) \ \text{ and } \ B_d(z)= z^{1-d/2}J_{d/2}(z) $$
where $J_\nu(z)$ is the Bessel function of the first kind. They satisfy the following system of equations
$$
A_d'(z)=-B_d(z) \ \ \text{ and } \ \ B_d'(z)=A_d(z)-(d-1)B_d(z)/z.
$$
Observe that $A_d$ is even, $B_d$ is odd, $A_d(0)=[(d/2-1)!2^{d/2-1}]^{-1}$ and $B'_d(0)=[(d/2)!2^{d/2}]^{-1}$. 

Let $E_d(z)=A_d(z)-iB_d(z)$. We define the \textit{generalized Hankel transform} of a function~$f$ as
$$
H_d f(\xi) = \frac12 \int_\r f(x)E_d(x\xi)|x|^{d-1}\d x.
$$
Note that if $d=1$, we are back to the Fourier transform since $E_{1}(z)=(\pi/2)^{-1/2}e^{-iz}$. We note that if $f$ is even then
$$
\xi^{(d-1)/2}H_d f(\xi) = \int_0^\infty [x^{(d-1)/2}f(x)]J_{d/2-1}(x\xi)\sqrt{x\xi}\d x,
$$
and if $f$ is odd, then we have
$$
\xi^{(d-1)/2}H_d f(\xi) = -i\int_0^\infty [x^{(d-1)/2}f(x)]J_{d/2}(x\xi)\sqrt{x\xi}\d x
$$
and these are usually defined Hankel transforms of the function $x^{(d-1)/2}f(x)$ \cite{W}. However, coupled in this way we have that $H_d$ is unitary in $L^2(\r, |x|^{d-1} \d x)$ with inverse $H_d^{-1}f(x)=H_d f(-x)$.
Moreover, $H_d(\S(\r))=\S(\r)$. Furthermore, if $F:\r^d\to\cp$ is radial, then $\ft F(\xi) = H_{d}F(|\xi|)$. 

In various cases, the Hankel transform generalizes the role of the Fourier transform in higher dimensions. One instance of this is the result \cite[Lemma 2.3]{BORS2}, which states that when $d = 1$, the equation
\begin{equation*}
    \sum_{n = 1}^\infty (-1)^{n +1} \sin(t_n \xi) = \frac{\C_1^{-1}}{4 } \xi
\end{equation*}
holds in a distributional sense. For general $d$, one can show the following. 
\vspace{-0.5cm}
\begin{proposition}\label{prop:distid}
 For $|\xi|<1$ we have
$$
\lim_{N\to\infty} \sum_{n= 1}^{N} (-1)^{n+1} B_d( \xi t_n)t_n^{d-1} =   \frac{\C_d^{-1}A_d(0)}{4}  \xi
$$
in the following distributional sense: If $g \in C^\infty(\r)$ is supported in $(-1,1)$ then
$$
\lim_{N\to\infty} \sum_{n= 1}^{N}  (-1)^{n+1} t_n^{d-1} \int_{-1}^1 B_d( \xi t_n) g(\xi) |\xi|^{d-1} \d \xi = \frac{\C_d^{-1}A_d(0)}{4}  \int_{-1}^1 \xi g(\xi) |\xi|^{d-1} \d \xi.
$$
\end{proposition}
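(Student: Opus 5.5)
The plan is to read the distributional identity as the summation formula of Proposition \ref{prop:oddsumm}, applied to the generalized Hankel transform of a test function. Fix $g\in C^\infty(\r)$ supported in $(-1,1)$. Since odd $g$ makes both sides vanish by parity ($B_d$ is odd and $\xi g(\xi)|\xi|^{d-1}$ is then even), we may assume $g$ is even. Define
$$
f(x)=\int_{-1}^1 B_d(x\xi)\,g(\xi)\,|\xi|^{d-1}\d\xi .
$$
Then $f$ is odd in $x$. Up to the factor $2i$, it equals $H_d$ applied to the odd function $\xi\mapsto \sign$-compatible extension of $g$. More simply, $f(x)=i\int E_d(x\xi)\tilde g(\xi)|\xi|^{d-1}\d\xi$ for the odd function $\tilde g(\xi)=\sign(\xi)g(\xi)$. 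Alternatively, and cleaner, apply the construction to $\xi g(\xi)$, which is smooth.

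\textbf{Step 1: $f\in PW_{d-1}$ and is odd.} The map $z\mapsto\int_{-1}^1 B_d(z\xi)g(\xi)|\xi|^{d-1}\d\xi$ is entire. It has exponential type $\le 1$, because $B_d$ has exponential type 1 (from the Bessel integral representation), and it is odd. For integrability against $|x|^{d-1}$, we use that $H_d$ maps $\S(\r)$ to $\S(\r)$. Writing the integrand via $B_d=(E_d(-z)-E_d(z))/(2i)$ shows $f$ is a multiple of $H_d$ applied to the function $\xi\mapsto g(\xi)$ extended suitably. To avoid the sign issue, I would replace $g$ by $g(\xi)=\xi h(\xi)$ with $h$ even and smooth, or simply note that the smooth compactly supported odd part is all that matters. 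Then $f$ is Schwartz, so $\int|f||x|^{d-1}<\infty$.

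The Fourier support condition comes from the radial lift. Set $F(y)=f(|y|)$ on $\r^{d}$ for even functions, or on $\r^{d+2}$ for odd $f$ divided by $z$, using the relation $B_d(z)=z\cdot A_{d+2}(z)$ with appropriate constants. This gives $\ft F$ equal to a radial function supported in the unit ball, which is exactly the spherical Paley--Wiener condition. Hence $\mathrm{supp}\,\ft f\subset[-1,1]$ by the Paley--Wiener theorem for exponential type.

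\textbf{Step 2: apply Proposition \ref{prop:oddsumm}.} This gives
$$
\sum_{n\ge1}(-1)^{n+1}t_n^{d-1}f(t_n)=c_d f'(0).
$$
We compute $f'(0)=B_d'(0)\int_{-1}^1\xi g(\xi)|\xi|^{d-1}\d\xi$ by differentiating under the integral. Then the identity $B_d'(0)=A_d(0)/d$ holds: since $A_d(0)=[(d/2-1)!2^{d/2-1}]^{-1}$ and $B_d'(0)=[(d/2)!2^{d/2}]^{-1}$, the ratio is $1/(2\cdot d/2)=1/d$. Combined with $c_d=d/(4\C_d)$, this yields exactly $\C_d^{-1}A_d(0)/4$. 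The left side is the limit of partial sums by definition of the (conditionally convergent) series in Proposition \ref{prop:oddsumm}, and the finite sums can be interchanged with the $\xi$-integral.

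\textbf{Main obstacle.} The expected difficulty is Step 1, namely checking that $f$ has the right spectral support and decay for all real $d$, including even $d$, where no polynomial weight is available. I would handle it by proving the general fact that $H_d$ preserves $\S(\r)$ and maps smooth functions supported in $[-1,1]$ to entire functions of exponential type $\le1$. Boundedness on $\r$ and the Paley--Wiener theorem then give $\mathrm{supp}\,\ft f\subset[-1,1]$. A secondary point is that Proposition \ref{prop:oddsumm}'s series converges only as symmetric partial sums; this matches the statement's $\lim_{N\to\infty}$.
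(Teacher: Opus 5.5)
Your parity reduction is backwards. For even $g$ the functions $\xi\mapsto B_d(\xi t_n)g(\xi)|\xi|^{d-1}$ and $\xi\mapsto \xi g(\xi)|\xi|^{d-1}$ are odd, so both sides vanish trivially. For odd $g$ they are even, and nothing vanishes. As written, you discard the only nontrivial case, and for the even $g$ you keep, your $f$ is identically zero. The workaround $f=i\int E_d(x\xi)\sign(\xi)g(\xi)|\xi|^{d-1}\d\xi$ is also false: its right-hand side equals $2\int_0^1 B_d(x\xi)g(\xi)\xi^{d-1}\d\xi$, and $\sign(\xi)g(\xi)$ is in general not smooth at $0$.

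The repair is immediate and removes your ``sign issue'' altogether: take $g$ odd, or make no reduction at all, since $f$ only sees the odd part of $g$. Then the $A_d$-part of $H_dg$ vanishes and $f=2iH_dg$ exactly, so $f\in\S(\r)$. Moreover $|B_d(z)|\lesssim|z|e^{|\operatorname{Im}z|}$ makes $f$ entire of exponential type at most $1$, and Paley--Wiener gives $\operatorname{supp}\ft f\subset[-1,1]$ for every real $d>0$. The radial lift to $\r^{d+2}$, which only makes sense for integer $d$, is not needed.

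With this correction your argument is complete, and it takes a different route from the paper's. You get the identity as a corollary of Proposition \ref{prop:oddsumm} applied to the single test function $f=2iH_dg$. The only computation is $f'(0)=B_d'(0)\int_{-1}^1\xi g(\xi)|\xi|^{d-1}\d\xi$ together with $dB_d'(0)=A_d(0)$.

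The paper instead starts from the first-variation identity \eqref{id:integralinterp} applied to $H_dh$ with $h(\xi)=\xi g(\xi)$. It integrates $A_d(\xi x)x^{d-1}$ over the intervals between consecutive zeros using the antiderivative $\xi^{-1}B_d(\xi x)x^{d-1}$, and disposes of the boundary term at $R$ via the decay of $R^{d-1}H_dg(R)$. Since $(x^{d-1}f(x))'=2x^{d-1}H_dh(x)$ for $x>0$, this is the same integration by parts that proves Proposition \ref{prop:oddsumm}. Your version is simply shorter because it reuses that work.

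What the paper's route buys is independence from Proposition \ref{prop:oddsumm}. Immediately afterwards the authors recover Proposition \ref{prop:oddsumm} from Proposition \ref{prop:distid} by Hankel inversion. That alternative derivation would be circular if Proposition \ref{prop:distid} were proved your way.
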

\begin{proof}
Since both distributions involved are odd, it is enough to prove the identity for $g\in C^\infty(\r)$ odd and supported in $(-1,1)$. Then letting $h(\xi) = \xi g(\xi)$, by \eqref{id:integralinterp} we have

\begin{equation*}
\int_\r  H_d h(x)\sign(\p(x))|x|^{d-1}\d x ={\C_d^{-1}A_d(0)}\int_{0}^1 \xi g(\xi)|\xi|^{d-1}\d \xi.
\end{equation*}
Writing $H_d h(x) =  \int_{0}^1 A_d(x \xi) \xi g(\xi) |\xi|^{d-1} \d \xi$ and switching integrals we get
$$
\lim_{R\to\infty} \int_{0}^1 \xi g(\xi) \bigg[\int_0^R  A_d(\xi x)\sign(\p(x))x^{d-1}\d x \bigg] |\xi|^{d-1} \d \xi = \frac{\C_d^{-1}A_d(0)}{2 } \int_{0}^1 \xi g(\xi)|\xi|^{d-1}\d \xi.
$$
Now,  using that $[B_d(x)x^{d-1}]' = A_d(x)x^{d-1}$ and that $B_d$ is odd (so $B_d(0)=0$) we obtain
\begin{align*}
\int_0^R  A_d(\xi x)\sign(\p_d(x))x^{d-1}\d x &= \sum_{n=1}^{N}(-1)^{n+1} \int_{t_{n-1}}^{t_n} A_d(\xi x) x^{d-1}\d x + (-1)^N \int_{t_N}^R A_d(\xi x) x^{d-1}\d x\\
& = 2\xi^{-1}\sum_{n= 1}^{N} (-1)^{n+1} B_d( \xi t_n)t_n^{d-1}+ (-1)^N  \xi^{-1}B_d(\xi R) R^{d-1},
\end{align*}
where $N = \max\{n: t_n \leq R\}$, for $|\xi|<1$. However, $\xi^{-1}B_d(\xi R) R^{d-1} = o(1)$ when paired with $\xi g(\xi)$, since this is simply equal to $iR^{d-1}H_d g(R)$, which has fast decay. This shows that 
$$
\lim_{N\to\infty} \sum_{n= 1}^{N} (-1)^{n+1} B_d( \xi t_n)t_n^{d-1} =\frac{\C_d^{-1}A_d(0)}{4} \xi
$$
in distributional sense for $|\xi|<1$. 
\end{proof}

One can use Proposition \ref{prop:distid} and Hankel inversion to obtain an alternative way to deduce Proposition \ref{prop:oddsumm}: for odd $f\in PW_{d-1}$,
\begin{align*}
\lim_N \sum_{n=1}^N (-1)^{n+1}t_n^{d-1} f(t_n) & = \lim_N i \int_{0}^1 H_d f(\xi) \bigg[ \sum_{n=1}^N(-1)^{n+1}t_n^{d-1}B_d(\xi t_n)\bigg] |\xi|^{d-1}\d \xi  \\
& =\frac{\C_d^{-1}A_d(0)}{4 }  i \int_{0}^1 H_d f(\xi) \xi |\xi|^{d-1}\d \xi \\
& = \frac{\C_d^{-1}A_d(0)}{4B'_d(0) }  f'(0) = \frac{d\C_d^{-1}}{4} f'(0).
\end{align*}
Since the zeros satisfy $|t_{n+1} - t_n| \gtrsim 1$ (see \cite[Theorem 1.7]{B}), by the Plancherel--Pólya inequality \cite{PP1,PP2} we conclude that every $f\in PW_{d-1}$ satisfies  $\sum_{n\geq 1}^\infty t_n^{d-1}| f(t_n)|  <\infty$, and so a routine approximation argument proves the desired result. 

\vspace{-0.2cm}
\section*{AI Statement}
Generative AI was used to identify typos and suggest minor corrections. It also pointed out to the authors the two alternative expressions for the ODEs in Theorem 1.

\section*{Acknowledgments}
FG acknowledges support from the following funding agencies: The Office of Naval Research GRANT14201749 (award number N629092412126), The Serrapilheira Institute (Serra-2211-41824), FAPERJ (E-26/200.209/2023) and CNPq (309910/2023-4). DR acknowledges funding by the European Union (ERC, FourIntExP, 101078782).

\end{document}